\documentclass[12pt]{article}
\usepackage[utf8]{inputenc}
\usepackage{amssymb}
\usepackage[margin=2.5cm]{geometry}

\usepackage{amsmath,amsthm,graphicx,float}
\usepackage{amssymb}
\usepackage{xcolor}
\usepackage{cleveref}
\usepackage{subcaption}
\usepackage{enumitem}
\usepackage{dsfont}
\usepackage{caption}
\usepackage{verbatim}
\usepackage{longtable}

\newtheorem{theorem}{Theorem}[section]
\newtheorem{problem}[theorem]{Problem}
\newtheorem{lemma}[theorem]{Lemma}
\newtheorem{proposition}[theorem]{Proposition}
\newtheorem{corollary}[theorem]{Corollary}

\newtheorem{procedure}[theorem]{Procedure}
\newtheorem{observation}[theorem]{Observation}

\newtheorem{definition}[theorem]{Definition}

\newcommand{\single}{\chi_{\nleftrightarrow}}
\newcommand{\ch}{\mathrm{ch}}

\newcommand{\sing}{\chi_{\nleftrightarrow}}

\newcommand{\cv}{\{c_v\}}

\newcommand{\setdiv}{\text{ }|\text{ }}

\newcommand{\E}{\mathbb{E}}
\renewcommand{\P}{\mathbb{P}}
\newcommand{\Var}{\text{Var}}

\newcommand{\Z}{\mathbb{Z}}

\begin{document}
\title{Single conflict coloring and palette sparsification of uniform 
hypergraphs\footnote{This paper is partially based on the thesis written 
by the second author under the supervision of the first author}}

\author{
{\sl Carl Johan Casselgren}\thanks{{\it E-mail address:} 
carl.johan.casselgren@liu.se}\\ 
Department of Mathematics \\
Link\"oping University \\ 
SE-581 83 Link\"oping, Sweden
\and
{\sl Kalle Eriksson}\thanks{{\it E-mail address:} 
kalle.eriksson@math.su.se}\\ 
Department of Mathematics \\
Stockholm University \\ 
SE-106 91 Stockholm, Sweden
}

\maketitle


\begin{abstract}
	We introduce and investigate single conflict coloring in the setting of 
	$r$-uniform hypergraphs.
	We establish some basic properties of this hypergraph coloring model
	and study a probabilistic model of single conflict coloring where the conflicts for each 
	edge are chosen randomly; in particular,
	we prove a sharp threshold-type result for complete graphs and 
	establish a sufficient condition for
	single conflict colorability of $r$-uniform hypergraphs in this model.
	Furthermore, we obtain a related palette sparsification-type result for general list coloring of linear
	uniform hypergraphs (i.e.\ uniform hypergraphs where any two edges share at most one common vertex).
	Throughout the paper we pose several questions and conjectures.
\end{abstract}

\noindent
{\bf Keywords:}  Hypergraph coloring; Single conflict coloring; Palette sparsification

\noindent
{\bf Mathematics Subject Classification:}  05C15

\section{Introduction}
Being first introduced in the 70's \cite{ERT, Vizing}, list coloring is a well-established area of hypergraph coloring.
Formally, given a hypergraph $G$, we assign to each vertex $v$ of $G$ a set
        $L(v)$ of colors; $L$ is called a 
   \emph{list assignment} for $G$ and
    the sets $L(v)$ are referred
    to as \emph{lists} or \emph{color lists}.
    If there is a proper coloring $\varphi$ of $G$ (i.e.~no edge is monochromatic) such that 
    $\varphi(v) \in L(v)$ for all $v \in V(G)$, then
	$G$ is \emph{$L$-colorable} and $\varphi$
    is called an \emph{$L$-coloring} or simply a {\em list coloring}. 
	Furthermore, $G$ is
    called \emph{$k$-choosable} if it is $L$-colorable
    for every list assignment $L$ where all lists have size at least $k$. 
	The minimum $k$ such that $G$ is $k$-choosable is the 
	{\em list-chromatic number} (or {\em choice number})  $\ch(G)$ of $G$.

Many hypergraph coloring models  translate naturally to the list coloring setting, and numerous variants of list coloring are studied in the literature. Our primary interest here, {\em single conflict coloring}, is a generalization
of several other (hyper-)graph list coloring invariants, so let us begin by briefly reviewing some related coloring models.

Given an edge coloring $f$ of a hypergraph $G$, a coloring $c : V(G) \to \mathbf{Z}$ is {\em adapted} to $f$ if no 
edge $e$ satisfies that every vertex contained in $e$ has the the color $f(e)$ under $c$. 
Adapted coloring was introduced by
Hell and Zhu \cite{HellZhu}, who
among other things proved that a graph is adaptably $2$-colorable
if and only if it contains an edge whose removal yields a bipartite graph. 
They also proved general bounds on the 
{\em adaptable chromatic number} $\chi_a(G)$,
that is, the least number of colors needed for an adaptable coloring of a graph $G$.

Building on the notion of adaptable coloring, Kostochka and Zhu
\cite{KostochkaZhu} considered the list coloring version of adaptable coloring for graphs and hypergraphs, 
where the vertices are assigned lists, and we seek
a list coloring that is adapted to a given edge coloring. The minimum size of the lists which ensures the existence of such an
adapted list coloring is the {\em adaptable choice number}
$\ch_{ad}(G)$ (or {\em adaptable choosability}) of a hypergraph $G$. 
Kostochka and Zhu proved a general upper bound on this invariant and also characterized graphs with adaptable choosability $2$.

Another variant of list coloring is so-called
choosability with separation, introduced by Kratochvil, Tuza and Voigt
\cite{KratochvilTuzaVoigt}. In this model, we require that color lists of  vertices contained in a common edge
have bounded intersection,
and ask for the minimum size of the lists which guarantees existence of a proper list coloring of the hypergraph in question.
Single conflict coloring is related to the variant where lists of vertices contained in a common edge have
at most one common color, often referred to as  {\em separation choosability}.
Denote by $\ch_{sep}(G)$ the smallest size of the lists which ensure that $G$ is colorable
from such lists, provided that lists of vertices contained in a common edge intersect in at most one color. This
particular variant was specifically studied in e.g. \cite{EsperetKangThomasse, FurediKostochkaKumbhat}.

Generalizing both the notion of adaptable choosability and separation choosability
\cite{KratochvilTuzaVoigt},
single conflict coloring of graphs was introduced by Dvorak et al.\ 
\cite{DvorakEsperetKangOzeki}. 
A {\em local $k$-partition} of a graph $G$ is a collection $\{c_v\}_{v \in V(G)}$ of maps 
	$c_v : E(v) \to \{1,\dots, k\}$, where $E(v)$ denotes the set of edges incident
	with $v$. We think of $c$ as a conflict map, and for each
	edge $e = xy \in E(G)$, $c$ yields a conflict-pair 
	$(c_{x}(e), c_{y}(e))$. Given such a local $k$-partition $\{c_v\}$,
	$G$ is {\em conflict $\{c_v\}$-colorable} if there is a map 
	$\varphi: V(G) \to \{1,\dots, k\}$ such that no edge $e = xy$ 
	satisfies that $\varphi(x) = c_{x}(e)$ and $\varphi(y) = c_{y}(e)$.
	The {\em single conflict chromatic number} $\single(G)$ of
	$G$ is the smallest $k$ such that
	$G$ is conflict $\{c_v\}$-colorable for every local $k$-partition $\{c_v\}$.

	Dvorak and Postle \cite{DvorakPostle} introduced correspondence coloring
	(also known as DP-coloring) as a generalization of list coloring. Roughly, the idea
	is to replace every vertex $v$ in a hypergraph $G$
	by a set $I_v$ of $k$ isolated vertices, and to replace every edge of $G$ by a (hyper-)matching.
	(The vertices in the added independent sets correspond to the colors in a list of size $k$.)
	If there is a transversal (i.e.\ an independent set intersecting each $I_v$ in exactly one vertex)
	in the resulting hypergraph for any choice of the matchings joining
	independent sets, then the graph is 
	{\em $k$-DP-colorable}.
	The {\em DP-chromatic number $\chi_{DP}(G)$} is the minimum $k$ such that
	$G$ is $k$-DP-colorable\footnote{See \cite{Schweser} for a formal definition.}.

	In this paper we introduce and study
	single conflict coloring in the setting
	of $r$-uniform hypergraphs.\footnote{A formal definition is given at the beginning of Section 2.}
	We establish some basic properties and relate the single conflict chromatic number to other hypergraph
	coloring invariants. 
	In particular, based on a construction in \cite{KostochkaZhu}, for every pair of integers $r,d$,
	we give an example of an $r$-uniform hypergraph $H_{r,d}$ such that
	$$\ch(H_{r,d}) = \ch_{sep}(H_{r,d}) = \ch_{ad}(H_{r,d}) = \single(H_{r,d}) = 
	\chi_{DP}(H_{r,d}) =d.$$

	Next, inspired by the probabilistic model of DP-coloring of graphs from random covers studied in
	\cite{BernshteynDominikKaulMudrock},
	we consider a random model of single conflict coloring of $r$-uniform hypergraphs, which we define
	formally in Section 3. As in \cite{BernshteynDominikKaulMudrock} we analyze the probability
	of the existence of a single conflict coloring in terms of degeneracy; we also prove a sharp
	threshold type result for complete graphs, and conjecture that an analogous statement is
	true for complete $r$-uniform hypergraphs.
	
	Finally, we consider the related problem of palette sparsification of ordinary list coloring
	of $r$-uniform hypergraphs.
	Assadi, Chen, and Khanna \cite{AssadiChenKhanna} recently proved that for every $n$-vertex graph $G$
	with maximum degree $\Delta$, sampling $O(\log n)$ colors for each vertex independently
	from $\Delta+1$
	colors suffices for ensuring the existence of a proper coloring of $G$ from the sampled colors
	with probability tending to $1$ as $n \to \infty$. 
	This palette sparsification result
	has received considerable interest and several generalizations and 
	variations of this result have been obtained.
	We obtain a similar result for linear $r$-uniform hypergraphs (i.e. hypergraphs where any two edges have at
	most one common vertex), thus establishing the first palette sparsification
	result for a large family of $r$-uniform hypergraphs. The proof of this result is based on
	a result due to 
	Drgas-Burchardt \cite{Drgas}, which does not appear to be very well-known.
	Additionally, we note that this result does not extend to general $r$-uniform hypergraphs, which answers
	a question from \cite{Casselgren5}.

\section{Single conflict coloring of hypergraphs}

	Although, to the best of our knowledge,
	single conflict coloring has previously only been studied in the setting of graphs,
	it translates naturally to the setting of hypergraphs. 
	A formal definition is as follows.
	Let $H$ be an $r$-uniform 
	hypergraph and denote by $E(v)$ the set of edges containing $v$. 
	A {\em local $k$-partition} of $H$ is a collection $\{c_v\}_{v \in V(H)}$ of maps 
	$c_v : E(v) \to \{1,\dots, k\}$. We think of $c$ as a conflict map, and for each
	edge $e = \{u_1, \dots, u_r\} \in E(H)$, $c$ yields a conflict-$r$-tuple 
	$(c_{u_1}(e), \dots, c_{u_r}(e))$. Given such a local $k$-partition $\{c_v\}$,
	$H$ is {\em conflict $\{c_v\}$-colorable} if there is a map 
	$\varphi : V(H) \to \{1,\dots, k\}$ such that no edge $e = \{u_1, \dots, u_r\}$ 
	satisfies that $\varphi(u_i) = c_{u_i}(e)$ for every $i=1,\dots, r$.
	The {\em single conflict chromatic number} $\single(H)$ of 
	$H$ is the smallest $k$ such that
	$H$ is conflict $\{c_v\}$-colorable for every local $k$-partition $\{c_v\}$.
 	
	Our main goal in this section is to present analogues for hypergraphs of several results
	previously obtained for graphs. We stress that in most cases the proofs are very similar
	to the ones in the graph setting, so we shall generally omit them.

	Alon \cite{Alon} proved that the list chromatic number of a graph with average
	degree $d$ is of order $\Omega(\log d)$, and a similar lower bound for separation choosability
	follows from results of 
	Esperet et al.~\cite{EsperetKangThomasse} and Kwan et al. \cite{Kwan}.
		This implies that both separation choosability, adaptable choosability and
	the single conflict chromatic number grows with the average degree of a graph.
	Indeed, for the latter parameter, the significantly better lower bound
	$\Omega(\sqrt{d/\log d})$ was established by Dvorak et al.~\cite{DvorakEsperetKangOzeki}.

	For hypergraphs the situation is quite different.	
	Let us first recall that a hypergraph is {\em linear (or simple)} if each pair of vertices
	appears in at most one common edge. It has been observed (see e.g.\ 
	\cite{SaxtonThomason}) that for hypergraphs the
	list-chromatic number does not necessarily grow with the average degree, but for
	linear hypergraphs, the list-chromatic number does in fact grow with
	the average degree \cite{AlonKostochka, SaxtonThomason}.

	Since both adaptable choosability and separation choosabilitiy are
	trivially at most the choosability for hypergraphs, this means that separation choosability and adaptable
	choosability do not grow
	with the average degree. Here we shall verify that the lower bound for single conflict
	coloring established in  \cite{DvorakEsperetKangOzeki} remains valid in the
	setting of hypergraphs. The following proposition implies that for hypergraphs, 
	this parameter shows a
	quite different behavior compared to ordinary choosability, and the aforementioned
	graph coloring invariants. The proof is essentially the
	same as in \cite{DvorakEsperetKangOzeki}, which in turn uses an idea
	of Bernstheyn \cite{Bernstheyn}, so we omit it.

\begin{proposition}
\label{prop:hyplowerbound}
	For any $r$-uniform hypergraph $H$ of average degree $d$,
	$\single(H) \geq \left\lceil \left(\frac{d}{\log d}\right)^{1/r}\right\rceil$.
\end{proposition}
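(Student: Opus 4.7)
The plan is to reproduce, in the $r$-uniform hypergraph setting, the probabilistic argument of Bernshteyn that underlies the graph bound of \Dvorak et al.~\cite{DvorakEsperetKangOzeki}. I would construct a random local $k$-partition and show that with positive probability it admits no conflict coloring; this yields some fixed local $k$-partition defeating $H$, whence $\single(H) > k$.

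First, I would sample each $c_v(e)$ independently and uniformly at random from $\{1, \dots, k\}$. For any fixed candidate coloring $\varphi : V(H) \to \{1, \dots, k\}$ and any fixed edge $e = \{u_1, \dots, u_r\}$, the probability that $c_{u_i}(e) = \varphi(u_i)$ for all $i$ is exactly $k^{-r}$. Since distinct edges are assigned independent labels, the events that different edges are violated by $\varphi$ are mutually independent, and therefore
\[
\mathbb{P}[\varphi \text{ is a valid conflict coloring of } H] = \left(1 - \frac{1}{k^r}\right)^{|E(H)|}.
\]

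Next, I would take a union bound over the $k^{|V(H)|}$ possible colorings. Using $|E(H)| = d|V(H)|/r$ and $1-x \le e^{-x}$, the probability that at least one valid coloring exists is at most
\[
k^{|V(H)|}\left(1 - \frac{1}{k^r}\right)^{d|V(H)|/r} \le \exp\!\left(|V(H)|\left(\log k - \frac{d}{rk^r}\right)\right),
\]
which is strictly less than $1$ whenever $r k^r \log k < d$.

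Finally, I would calibrate $k = \lceil (d/\log d)^{1/r} \rceil - 1$, so that $k^r < d/\log d$ and $\log k < \tfrac{1}{r}\log d$; multiplying these together gives $r k^r \log k < d$, which is precisely what is needed. The existence of a local $k$-partition admitting no conflict coloring then yields $\single(H) \ge k+1 = \lceil (d/\log d)^{1/r} \rceil$, and the boundary cases where the right-hand side equals $1$ are trivial. The only real obstacle is this routine calibration of $k$; there is no conceptual difficulty beyond adapting the graph-case probabilistic argument to handle conflict $r$-tuples rather than pairs, which explains why the authors choose to omit the proof.
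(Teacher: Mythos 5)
Your proof is correct and is precisely the argument the paper has in mind: a first-moment bound over random local $k$-partitions in the style of Bernshteyn, which is what the authors cite when omitting the proof, and which is also the computation they carry out explicitly for Proposition \ref{Proposition: Near 0 prob for sing} (with $\rho=d/r$ in place of $d$); your calibration correctly recovers the extra factor $r^{1/r}$ because $\log k<\tfrac{1}{r}\log d$ rather than $\log d$. The only caveat is that this last inequality uses $\log\log d\geq 0$, i.e.\ $d\geq e$, but that is an implicit hypothesis of the proposition itself, which is vacuous or false for smaller $d$.
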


	Next, we note that a Brooks-type bound obtained for graphs
	\cite{DvorakEsperetKangOzeki} extend to the setting of hypergraphs.
	This can be proved by a standard application of the Lovasz local lemma as
	is done for similar graph coloring
	invariants \cite{DvorakEsperetKangOzeki, KostochkaZhu, KratochvilTuzaVoigt}.
	The bound for graphs in \cite{DvorakEsperetKangOzeki} can be
	slightly improved bound using the cut lemma due to
	Bernshteyn \cite{Bernshteyn2}, as remarked  in \cite{DvorakEsperetKangOzeki}.
	This approach also works in the setting of $r$-uniform hypergraphs:
	the full proof of the following proposition appears in \cite{Eriksson} and we omit it here.

\begin{proposition}
\label{prop:maxdegree}
	For any $r$-uniform hypergraph $H$ with maximum degree $\Delta=\Delta(H)$,
    $
    \sing(H)\leq \left \lceil r \Delta^{1/r}\right  \rceil. 
    $
\end{proposition}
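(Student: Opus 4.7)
The plan is to apply a random coloring argument together with a Lovász Local Lemma--type statement, strengthened by Bernshteyn's cut lemma \cite{Bernshteyn2} in order to obtain the sharp constant. Set $k = \lceil r \Delta^{1/r}\rceil$ and fix an arbitrary local $k$-partition $\cv$ of $H$. I would color every vertex $v \in V(H)$ independently and uniformly at random from $\{1,\dots,k\}$, producing a random map $\varphi : V(H) \to \{1,\dots,k\}$; the aim is to show that with positive probability $\varphi$ is a conflict $\cv$-coloring, which will establish the bound.

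For each edge $e = \{u_1,\dots,u_r\}$, let $A_e$ be the bad event that $\varphi(u_i) = c_{u_i}(e)$ for every $i = 1,\dots,r$. By independence of the $\varphi(u_i)$, we have $\P(A_e) = k^{-r}$. Since $A_e$ is mutually independent of $\{A_{e'} : e' \cap e = \emptyset\}$, the maximum degree of the associated dependency graph is at most $r(\Delta-1)$. Thus the task reduces to verifying an LLL-type hypothesis for the family $\{A_e\}_{e \in E(H)}$ with $p = k^{-r}$ and $d \le r(\Delta-1)$.

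Feeding these parameters into the standard symmetric local lemma essentially requires $k^r \ge e \cdot r\Delta$, which reduces to $r^{r-1} \ge e$ and hence already suffices for every $r \ge 3$. The $r = 2$ case, however, lies just beyond the reach of the classical symmetric LLL: for graphs one would only obtain $\lceil c \sqrt{\Delta}\rceil$ with $c > 2$, whereas the proposition insists on $c = 2$. To close this gap one invokes Bernshteyn's cut lemma, which effectively removes the leading constant in front of $pd$ and yields the desired conclusion in this borderline regime as well.

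The main obstacle is therefore not the overall probabilistic setup, which is a routine adaptation of the graph argument of \cite{DvorakEsperetKangOzeki} with conflict $r$-tuples in place of conflict pairs, but the tightness of the constant for small $r$: without the cut lemma the bound would worsen by a multiplicative factor, so the whole point of the strengthened tool is to make the proof go through uniformly in $r$. The full execution, carried out along these lines, is written down in \cite{Eriksson}.
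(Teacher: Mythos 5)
Your proposal matches the paper's intended argument exactly: the paper likewise proves this by a standard symmetric Lov\'asz Local Lemma computation (bad event $A_e$ with probability $k^{-r}$ and dependency degree at most $r(\Delta-1)$), sharpened via Bernshteyn's cut lemma to reach the constant $r$, and it too defers the full details to \cite{Eriksson}. Your observation that the plain symmetric LLL already suffices for $r\geq 3$ (since $r^{r-1}\geq e$) and that the cut lemma is only genuinely needed in the borderline case $r=2$ is a correct and slightly finer accounting than the paper gives.
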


\newcommand{\K}{\mathcal{K}}
The chromatic number of a complete $n$-order $r$-uniform hypergraph is known to be $\lceil n/(r-1)\rceil$. 
Moreover, in \cite{FurediKostochkaKumbhat} results on the separation choosability and choosability of 
complete $r$-uniform balanced $r$-partite hypergraphs were obtained, where an $r$-uniform hypergraph
is \emph{$k$-partite} if the vertex set $V(H)$ can be partitioned into $k$ sets $V_1,\dots, V_k$ so that each edge contains at most one vertex from each set. Indeed, for such a complete  $r$-partite $r$-uniform hypergraph $H$ with parts of size $n$, we have that $\ch(H) = (1+o(1))\log_r n$.
These facts along with the two preceding propositions, implies the following
hypergraph version of a proposition observed for ordinary graphs in \cite{CasselgrenEriksson};
for details, see \cite{Eriksson}.

\begin{proposition}
    For every positive integer $k$ and every $r\geq 2$, there are $r$-uniform hypergraphs $G_k$ and $H_k$ such that $\sing(G_k)-\ch(G_k)\geq k$, and $\ch(H_k)-\sing(H_k)\geq k$.
\end{proposition}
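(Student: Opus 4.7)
The plan is to separate the two parameters by taking complete-like $r$-uniform hypergraphs of growing order; the two preceding propositions, together with the two facts just quoted, should do all the work, and no new construction is needed.

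For the first inequality, I would let $G_k$ be the complete balanced $r$-partite $r$-uniform hypergraph with parts of size $n$, for $n$ to be chosen large. From the cited bound on the choice number of such hypergraphs, $\ch(G_k) = (1+o(1))\log_r n$, which is logarithmic in $n$. On the other hand, every vertex has degree $n^{r-1}$, so Proposition \ref{prop:hyplowerbound} gives the polynomial lower bound
\[
\sing(G_k) \;\geq\; \left(\frac{n^{r-1}}{\log n^{r-1}}\right)^{1/r} \;=\; \Omega\!\left(\frac{n^{(r-1)/r}}{(\log n)^{1/r}}\right).
\]
Since a polynomial in $n$ beats any logarithm, picking $n$ large enough forces $\sing(G_k) - \ch(G_k) \geq k$.

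For the second inequality, I would let $H_k$ be the complete $r$-uniform hypergraph on $n$ vertices. A color class in a proper coloring contains no edge, hence has size at most $r-1$, so $\chi(H_k) = \lceil n/(r-1)\rceil$ and therefore $\ch(H_k) \geq n/(r-1)$, which is linear in $n$. The maximum degree is $\Delta(H_k) = \binom{n-1}{r-1}$, so Proposition \ref{prop:maxdegree} yields
\[
\sing(H_k) \;\leq\; \left\lceil r \binom{n-1}{r-1}^{1/r} \right\rceil \;=\; O(n^{(r-1)/r}).
\]
Since $(r-1)/r < 1$, the linear term $n/(r-1)$ dominates this sublinear upper bound, and the gap $\ch(H_k) - \sing(H_k)$ exceeds any fixed $k$ once $n$ is large.

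The whole argument reduces to choosing the right two families of examples; the real content is in recognising that $r$-partiteness is what keeps $\ch$ small while the degree stays large, and conversely that the complete hypergraph $K_n^{(r)}$ forces $\chi$ (hence $\ch$) to grow linearly while its single conflict chromatic number remains polynomially smaller by the Brooks-type estimate in Proposition \ref{prop:maxdegree}. I anticipate no serious obstacle; the only remaining step is the routine asymptotic comparison.
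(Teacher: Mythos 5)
Your proposal is correct and matches the paper's intended argument exactly: the paper derives this proposition from the same two families (complete balanced $r$-partite $r$-uniform hypergraphs for $G_k$, complete $r$-uniform hypergraphs for $H_k$), the same quoted facts about $\ch$ and $\chi$ of these hypergraphs, and the same appeal to Propositions \ref{prop:hyplowerbound} and \ref{prop:maxdegree}, deferring the routine asymptotic details to the cited thesis. No discrepancies to report.
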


We remark that 
it appears to be an open question how large the difference between $\ch_{ad}(H)$ and $\ch_{sep}(H)$ can be
for an $r$-uniform hypergraph $H$, as is also the case for ordinary graphs \cite{CasselgrenEriksson}.

Furthermore, since  $\single(H) \leq \chi_{DP}(H)$ holds for any hypergraph $H$, 
Proposition \ref{prop:hyplowerbound}
yields a lower bound on the correspondence chromatic number of a hypergraph. 
We note that
this bound can be slightly improved using a virtually identical proof.

\begin{proposition}
	For any $r$-uniform hypergraph $H$ of average degree $d$,
	$\chi_{DP}(H) \geq \left\lceil \left(\frac{d}{r \log d}\right)^{1/(r-1)}\right\rceil$.
\end{proposition}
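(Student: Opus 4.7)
The plan is to adapt the random-partition argument behind Proposition \ref{prop:hyplowerbound} to the DP-coloring setting. The key new feature is that a DP cover allows an entire hypermatching on each edge rather than a single conflict tuple, and this extra freedom boosts the per-edge violation probability from $k^{-r}$ to $k^{-(r-1)}$, which is exactly what upgrades the exponent from $1/r$ to $1/(r-1)$.

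Set $k=\lceil (d/(r\log d))^{1/(r-1)}\rceil -1$, so that $k^{r-1}<d/(r\log d)$; it suffices to show that $H$ is not $k$-DP-colorable. I will construct a random $k$-cover as follows: for each edge $e=\{u_1,\dots,u_r\}$, independently draw $r-1$ uniformly random permutations $\pi^e_2,\dots,\pi^e_r$ of $[k]$, and declare the hypermatching on $e$ to be
\[
M_e \;=\; \bigl\{\bigl(c,\pi^e_2(c),\dots,\pi^e_r(c)\bigr):c\in[k]\bigr\}.
\]
Since each $\pi^e_i$ is a bijection, every color at every $u_i$ appears in exactly one tuple of $M_e$, so $M_e$ really is a (perfect) hypermatching and the construction yields a valid DP cover.

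Next, for any fixed map $\varphi:V(H)\to[k]$ and any fixed edge $e$, the tuple $(\varphi(u_1),\dots,\varphi(u_r))$ lies in $M_e$ exactly when $\pi^e_i(\varphi(u_1))=\varphi(u_i)$ for all $i=2,\dots,r$, and by independence of the permutations this event has probability $k^{-(r-1)}$. The violation events for distinct edges depend on disjoint sets of random permutations and so are jointly independent; with $n=|V(H)|$ and $m=|E(H)|=nd/r$, the union bound yields
\[
\Pr[\text{some }\varphi\text{ is a valid transversal}] \;\le\; k^n\bigl(1-k^{-(r-1)}\bigr)^m \;\le\; \exp\!\bigl(n\log k - mk^{-(r-1)}\bigr).
\]
The remaining routine check is that $mk^{-(r-1)}=(nd/r)k^{-(r-1)}>n\log d\ge n\log k$ under the chosen value of $k$, so the right-hand side is strictly less than $1$. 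Hence some realization of the random cover admits no valid transversal, as desired.

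The only real obstacle is identifying the correct random model: one has to recognize that a hypermatching on the cover of an $r$-edge is naturally parametrized by $r-1$ permutations of $[k]$, and that sampling these permutations independently and uniformly is the appropriate DP analogue of sampling a single uniformly random conflict tuple in the proof of Proposition \ref{prop:hyplowerbound}. Once this is in place, the calculation is essentially the same as in the single-conflict case, with $k^{r-1}$ replacing $k^r$.
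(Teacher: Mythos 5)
Your proof is correct and is exactly the argument the paper has in mind: the paper omits this proof, describing it as a ``virtually identical'' modification of the first-moment argument behind Proposition \ref{prop:hyplowerbound}, and your random cover built from $r-1$ independent uniform permutations per edge (which boosts the per-edge violation probability from $k^{-r}$ to $k^{-(r-1)}$ and makes the edge events independent) is the standard Bernshteyn-style realization of that plan. The only caveat, shared with the paper's own statement, is that the final comparison $n\log k\le n\log d< mk^{-(r-1)}$ implicitly requires $d$ large enough that $k\le d$; for small $d$ the claimed bound is vacuous or needs a separate trivial check.
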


	Next, we compare the single conflict chromatic number to adaptable and separation choosability.
	For the case of graphs, the following inequalities were proved in \cite{DvorakEsperetKangOzeki};
	indeed the latter inequality has been
	observed in several places in the literature 
	\cite{EsperetKangThomasse, CasselgrenGranholmRaspaud, DvorakEsperetKangOzeki}.

\begin{proposition}
\label{prop:hyperinequality}
	For any $r$-uniform hypergraph $H$,
	$\single(H) \geq \ch_{ad}(H) \geq \ch_{sep}(H)$.
\end{proposition}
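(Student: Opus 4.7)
The plan is to imitate the graph-case arguments from \cite{DvorakEsperetKangOzeki, EsperetKangThomasse}, both of which transfer to hypergraphs with only cosmetic modifications.

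For $\single(H) \geq \ch_{ad}(H)$, set $k = \single(H)$ and let $L$ be a list assignment with $|L(v)| = k$ together with an edge coloring $f\colon E(H) \to \mathbb{Z}$. I would fix, for each vertex $v$, a bijection $\pi_v\colon L(v) \to \{1,\dots,k\}$, and then transport $f$ into a local $k$-partition $\{c_v\}$ by setting $c_v(e) = \pi_v(f(e))$ when $f(e) \in L(v)$ and $c_v(e) = 1$ otherwise. A conflict $\{c_v\}$-coloring $\varphi$ exists by hypothesis, and I would pull it back via $\psi(v) := \pi_v^{-1}(\varphi(v)) \in L(v)$ to produce an adapted $L$-coloring: if some edge $e = \{u_1,\dots,u_r\}$ satisfied $\psi(u_i) = f(e)$ for every $i$, then in particular $f(e) \in L(u_i)$ for each $i$, so $\varphi(u_i) = \pi_{u_i}(f(e)) = c_{u_i}(e)$ for all $i$, contradicting the conflict property of $\varphi$.

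For $\ch_{ad}(H) \geq \ch_{sep}(H)$, set $k = \ch_{ad}(H)$ and let $L$ be a list assignment of size $k$ satisfying the separation property that $\bigcap_{v \in e} L(v)$ contains at most one color for every edge $e$. I would define an edge coloring $f$ of $H$ by taking $f(e)$ to be the unique common color when $\bigcap_{v \in e} L(v) \neq \emptyset$, and any integer outside $\bigcup_v L(v)$ otherwise. Applying $\ch_{ad}(H) = k$ produces an adapted $L$-coloring $\varphi$. If some edge $e = \{u_1,\dots,u_r\}$ were monochromatic with common color $c$, then $c \in L(u_i)$ for every $i$, so $c \in \bigcap_{v \in e} L(v) = \{f(e)\}$, i.e.\ $\varphi(u_i) = f(e)$ for every $i$, contradicting adaptedness. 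Hence $\varphi$ is a proper $L$-coloring.

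Neither step is genuinely difficult; both are encoding/decoding translations between vertex-lists and edge-labels. The one subtlety worth flagging is the convention used for ``separation'' when $r \geq 3$, since the condition admits both a pairwise version ($|L(u) \cap L(v)| \le 1$ for each pair $u,v$ in a common edge) and a total version ($|\bigcap_{v \in e} L(v)| \le 1$); the argument above uses the latter, but under the pairwise interpretation the proof is unchanged once $f(e)$ is chosen from $L(u_1) \cap L(u_2)$ for any fixed pair in $e$.
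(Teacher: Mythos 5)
Your proposal is correct and follows essentially the same route as the paper: the first inequality is the same transport of the edge coloring into a local $k$-partition via per-vertex bijections (the paper phrases the relabeling slightly differently but the content is identical), and the second is the same construction of an adapted-coloring instance from a separated list assignment, with your choice of assigning a color outside all lists to edges with empty common intersection playing the role of the paper's step of deleting those edges. Your closing remark on the pairwise versus total separation convention is a sensible observation but not needed, since the pairwise condition used in the paper already forces $\bigl|\bigcap_{v\in e}L(v)\bigr|\le 1$.
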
	
\begin{proof}
	Let us begin by proving the first inequality. Assume that $k = \single(H)$ and consider
	a $k$-list assignment $L$ for $H$ together 
	with a coloring $\alpha$ of the edges of
	$H$. We define a local $k$-partition $\{c_v\}_{v \in V(H)}$ of $H$ by setting $c_v(e) = \alpha(e)$ 
	if $\alpha(e) \in L(v)$, for every
	edge $e$ of $H$. (Otherwise, define $c_v(e)$ to be an arbitrary integer from
	$L(v)$.) Then, the image of every $c_v$ is a set of size at most $k$, and by
	bijectively mapping this set 
	to $\{1,\dots, k\}$, for every $v \in V(H)$, 
	we may assume that
	$\{c_v\}_{v\in V(H)}$ is a local $k$-partition. By the choice of $k$, there is
	a conflict $\{c_v\}$-coloring of $H$. Clearly, this coloring corresponds to an
	$L$-coloring adapted to $\alpha$. Hence, $\single(H) \geq \ch_{ad}(H)$.

	Next let us prove that $\ch_{ad}(H) \geq \ch_{sep}(H)$. Assume that
	$\ch_{ad}(H) =k$, and consider a list assignment $L$ for $H$ so that
	$|L(u) \cap L(v)| \leq 1$ for any two vertices $u,v$ contained in the same edge.
	Consider an edge $e$ where not all vertices contains a common color in their lists. Note that
	such an edge is not monochromatic in any $L$-coloring of $H$. Thus, we remove
	any such edge from $H$ and consider the obtained hypergraph $H'$. 
	Then any proper
	$L$-coloring of $H'$ is a proper $L$-coloring of $H$.

	On the other hand, for any edge $e$ where all vertices contains a common color 
	$a$, we color $e$ by the color $a$. Since there is at most one such color per edge
	in $H'$, this yields a coloring $\alpha$ of the edges of $H'$. Now, by the choice of
	$k$, there is an $L$-coloring $\varphi$ of $H'$ that is adapted to $\alpha$. It
	is straightforward that $\varphi$ is a proper $L$-coloring of $H$.
\end{proof}

Now, by contrast, let us  describe a family of hypergraphs where the chromatic numbers of
all the considered variants of list coloring coincide.
Our proof of the following proposition is based on a construction by 
	Kostochka and Zhu
	\cite{KostochkaZhu}.
	
	\begin{proposition}
	\label{prop:eq}
		For all positive integers $r,d \geq 2$,
		there is an $r$-uniform $r$-partite hypergraph $H_{r,d}$ such that 
		$$\ch_{sep}(H_{r,d}) = \ch_{ad}(H_{r,d}) = 
		\single(H_{r,d})= \ch(H_{r,d}) = \chi_{DP}(H_{r,d})=d.$$
	\end{proposition}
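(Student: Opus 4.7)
The strategy is to exploit the chain of inequalities already available. By Proposition~\ref{prop:hyperinequality} combined with the observation $\sing(H) \leq \DP(H)$ noted earlier in the introduction,
\[
\chsep(H_{r,d}) \leq \chad(H_{r,d}) \leq \sing(H_{r,d}) \leq \DP(H_{r,d}),
\]
and also trivially $\chsep(H_{r,d}) \leq \ch(H_{r,d}) \leq \DP(H_{r,d})$. To conclude that all five parameters equal $d$, it therefore suffices to exhibit an $r$-partite $r$-uniform hypergraph $H_{r,d}$ satisfying simultaneously $\DP(H_{r,d}) \leq d$ and $\chsep(H_{r,d}) \geq d$.

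I would construct $H_{r,d}$ as an $r$-partite analogue of the bipartite graph used by Kostochka and Zhu \cite{KostochkaZhu}. Fix a sufficiently large ground set $[N]$ and let each part $V_i$ consist of a disjoint copy of the family of $(d-1)$-subsets of $[N]$. Include an edge $\{u_1,\dots,u_r\}$ with $u_i \in V_i$ precisely when the associated $(d-1)$-subsets are pairwise almost-disjoint (pairwise intersections of size at most one) and share a common element $a_e$ chosen according to a prescribed combinatorial pattern. The identity list assignment $L(u) = u$ then has list size $d-1$, satisfies the separation hypothesis on every edge, and equips each edge $e$ with a unique loaded colour $a_e = \bigcap_{u \in e} L(u)$; a monochromatic edge under $L$ is precisely one in which every vertex of $e$ is coloured $a_e$.

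For the lower bound $\chsep(H_{r,d}) \geq d$, the plan is to show that any choice $\varphi(u) \in L(u)$ forces some edge to be monochromatic in its loaded colour. This should follow from a pigeonhole/double-counting argument on the quantities $|\{u \in V_i : a \in L(u),\ \varphi(u) = a\}|$ for each pair $(i,a)$, combined with the richness of the edge set, taking $N$ large enough in terms of $r$ and $d$ (following the graph case treated in \cite{KostochkaZhu}). For the upper bound $\DP(H_{r,d}) \leq d$, I would process the parts in a suitable order and exploit both the $r$-partite structure and a built-in degeneracy property of the construction: an edge of $H_{r,d}$ only constrains the last of its vertices to be coloured, and forbids at most a single colour at that vertex in the DP-cover; a Hall-type matching argument on the remaining palette should then extend any partial DP-coloring on $V_1 \cup \dots \cup V_{r-1}$ to all of $H_{r,d}$.

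The main obstacle is the tension between these two requirements. The edge set must be rich enough that separation lists of size $d-1$ are always insufficient, yet sparse and structured enough that every DP-cover with $d$ colours per vertex admits a proper transversal. Balancing these objectives at the same integer $d$ is the delicate step, and is precisely where the Kostochka--Zhu bipartite construction needs to be adapted to $r \geq 3$; I expect the correct combinatorial pattern on shared colours to come either from a Steiner-system-like design or from an inductive construction that builds $H_{r,d}$ from $H_{r-1,d}$ by attaching a new part in a controlled manner.
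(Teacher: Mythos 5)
Your reduction is the right one and matches the paper's: by Proposition~\ref{prop:hyperinequality} together with $\sing \leq \DP$, $\ch \leq \DP$ and $\chsep \leq \ch$, it suffices to produce an $r$-partite $r$-uniform hypergraph with $\chsep \geq d$ and $\DP \leq d$. The gap is in how you propose to realize both bounds at once. Your direct Kneser-type construction (parts consisting of $(d-1)$-subsets of a large ground set $[N]$, identity lists, a rich system of almost-disjoint edges sharing a loaded colour) is designed so that the lower bound comes from a pigeonhole argument that needs $N$, and hence the edge density, to be large. But large average degree is exactly what kills the upper bound: by the DP-analogue of Proposition~\ref{prop:hyplowerbound} stated in the paper, $\DP(H)$ grows with the average degree of $H$, so any hypergraph dense enough for your counting argument to force a monochromatic edge will have $\DP > d$ once $N$ is large. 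Your proposed fix --- that each edge ``forbids at most a single colour at the last of its vertices'' so a Hall-type argument finishes --- is really the assertion that vertices of the last part have back-degree at most $d-1$, i.e.\ that the hypergraph is $(d-1)$-degenerate; nothing in your construction guarantees this, and you yourself flag the tension as unresolved. So the proposal as written does not close.

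The paper resolves exactly this tension by induction on $d$ combined with a degeneracy-preserving gadget construction in the spirit of Kostochka--Zhu. One takes an auxiliary $r'$-uniform $r'$-partite hypergraph $G$ with $\chsep(G) \geq d+1$ (Proposition~\ref{lem:large}); this $G$ may be very dense and is \emph{not} used directly. Instead, each edge $f$ of $G$ is discarded and replaced by a fresh copy $H_f$ of the inductively constructed $(d-1)$-degenerate hypergraph $H$ with $\chsep(H)=d$, attached to a partition of $f$ through new $r$-uniform edges $\{u_{j,f}\}\cup X_j$. The resulting hypergraph is $d$-degenerate by construction, which gives $\DP \leq d+1$ (and all the other upper bounds) for free, while a carefully merged separated list assignment built from the bad assignments for $G$ and for the copies of $H$ shows $\chsep \geq d+1$. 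The missing idea in your proposal is this replacement of the edges of the dense lower-bound witness by low-degeneracy gadgets; without it, the upper bound cannot be salvaged.
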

	
	We shall need the following lemma, which e.g follows from a main result of
	\cite{FurediKostochkaKumbhat}.
	\begin{proposition}
	\label{lem:large}
		For all positive integers $k,r \geq 2$, there is 
		an $r$-uniform $r$-partite hypergraph $H_k$
		with $\ch_{sep}(H_k) \geq k$.
	\end{proposition}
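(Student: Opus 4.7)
The plan is to let $H_k$ be the complete balanced $r$-partite $r$-uniform hypergraph on vertex parts $V_1,\dots,V_r$ of common size $n$ (edges being all $r$-sets with exactly one vertex in each $V_i$), where $n = n(k,r)$ will be chosen large. This hypergraph is trivially $r$-uniform and $r$-partite, so the only task is to force $\ch_{sep}(H_k) \geq k$.

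For this I would invoke the main theorem of Füredi, Kostochka and Kumbhat~\cite{FurediKostochkaKumbhat}, which gives a lower bound on the separation choosability of the complete balanced $r$-partite $r$-uniform hypergraph with parts of size $n$ that grows without bound in $n$ (for every fixed $r \geq 2$). Given $k, r \geq 2$, one chooses $n$ large enough so that the lower bound from \cite{FurediKostochkaKumbhat} is at least $k$, and sets $H_k$ equal to this hypergraph.

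The substantive work sits inside the Füredi--Kostochka--Kumbhat bound rather than in this lemma; in spirit that bound is proved by a probabilistic construction, assigning to each vertex a uniformly random list of size $k-1$ from a palette of appropriate size, and then showing via a first-moment or Lovász-local-lemma-type argument that with positive probability both (i) the lists of any two vertices belonging to a common edge intersect in at most one color, and (ii) no proper list coloring from these lists exists (each potential coloring leaves some edge monochromatic). The main obstacle is balancing the palette size to secure (i) and (ii) simultaneously, but this is already carried out in~\cite{FurediKostochkaKumbhat}, so the present lemma only requires citing their result and picking $n$ accordingly.
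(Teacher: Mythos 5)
Your proposal is correct and takes essentially the same route as the paper: the paper also obtains this proposition by simply invoking the main result of F\"uredi, Kostochka and Kumbhat \cite{FurediKostochkaKumbhat} on the separation choosability of complete balanced $r$-partite $r$-uniform hypergraphs, which grows with the part size $n$, and choosing $n$ large enough. The additional sketch of how that cited bound is proved is not needed, but it does no harm.
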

	In \cite{DvorakEsperetKangOzeki} it was noted that every $d$-degenerate
	graph $G$ satisfies that $\single(G) \leq d+1$. This clearly holds in the setting of
	hypergraphs as well (for all the chromatic invariants in Proposition \ref{prop:eq}).
	\begin{proposition}
		For every $d$-degenerate hypergraph $H$, $\single(H) \leq d+1$.	
	\end{proposition}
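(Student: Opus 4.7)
The plan is a direct greedy argument, mirroring the graph case noted in \cite{DvorakEsperetKangOzeki}. Since $H$ is $d$-degenerate, the standard peeling process yields an ordering $v_1, v_2, \ldots, v_n$ of $V(H)$ such that, for every $i$, the vertex $v_i$ is contained in at most $d$ edges $e \in E(H)$ whose remaining vertices all lie in $\{v_1, \ldots, v_{i-1}\}$; I will call these the \emph{back edges} of $v_i$. Fix an arbitrary local $(d+1)$-partition $\{c_v\}$ of $H$ and build $\varphi : V(H) \to \{1, \ldots, d+1\}$ by coloring the vertices in the above order.

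When it is $v_i$'s turn, consider each back edge $e = \{u_1, \ldots, u_{r-1}, v_i\}$. If every already-colored $u_j$ satisfies $\varphi(u_j) = c_{u_j}(e)$, then setting $\varphi(v_i) = c_{v_i}(e)$ would violate the conflict rule on $e$; otherwise, $e$ imposes no restriction on $v_i$. Thus each back edge forbids at most one color for $v_i$, and with at most $d$ back edges in total, at most $d$ colors out of $d+1$ are excluded, so a valid color for $v_i$ always exists.

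Every edge $e$ of $H$ has a unique latest-indexed vertex $v_j$ along the ordering, and at the moment $v_j$ is colored, $e$ is one of the back edges of $v_j$; the greedy rule therefore guarantees that the forbidden $r$-tuple is avoided on $e$. Hence $\varphi$ is a conflict $\{c_v\}$-coloring, proving $\single(H) \le d + 1$. The only real point to notice — and the step where the hypothesis is genuinely used — is that a back edge contributes \emph{at most one} forbidden color at $v_i$, which distinguishes this argument from a comparable list-coloring proof and means there is no serious technical obstacle to the extension from graphs to $r$-uniform hypergraphs.
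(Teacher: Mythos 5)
Your proof is correct and is exactly the standard greedy argument over a degeneracy ordering that the paper has in mind when it states this proposition without proof (and which reappears explicitly as Procedure~3.4 in Section~3): each back edge forbids at most one color at the current vertex, so $d+1$ colors always suffice. Nothing further is needed.
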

	
	Using these facts we can prove Proposition \ref{prop:eq}.
	\begin{proof}[Proof of Proposition \ref{prop:eq}]
		We prove that for every $d$, there is a $(d-1)$-denegerate
		$r$-uniform hypergraph $H_{r,d}$ with $\ch_{sep}(H_{r,d}) = d$.
		Since such a $(d-1)$-degenerate hypergraph satisfies that
		$\ch_{sep}(H_{r,d}) = \ch_{ad}(H_{r,d}) = 
		\single(H_{r,d})= \ch(H_{r,d}) = \chi_{DP}(H_{r,d})= d$, this suffices for proving 
		the theorem.
		
		We proceed by induction on $d$. The case $d=2$ is trivial.
		Assume that the proposition is true for degeneracy $d-1 \geq 2$, and
		let $H$ be an $r$-uniform $(d-1)$-degenerate $r$-uniform 
		hypergraph with $\ch_{sep}(H)=d$. We set $n = |V(H)|$
	and $r' = n(r-1)$.

	We construct
	an $r$-uniform $d$-degenerate hypergraph $G'$ 
	with $\ch_{sep}(G')=d+1$ as follows. 
	By Proposition \ref{lem:large}, there is an $r'$-uniform $r'$-partite hypergraph $G$
	with $\ch_{sep}(G) \geq d+1$.
	For each edge $f = \{v_1,v_2, \dots, v_{r'}\}$ of $G$, we add
	a disjoint copy of $H$, denoted $H_f$, with vertex set
	$\{u_{1,f}, \dots, u_{n,f}\}$. We partition the vertices in $f$
	into $n$ parts $X_1,\dots, X_n$, each containing $r-1$ vertices,
	by letting $X_1$ consist of the first $r-1$ parts of $G$, $X_2$ consist of the next $r-1$ parts of $G$, etc.
	The graph $G'$ is now formed from the copies of $H$, and the
	vertices of $G$, by including the edges $\{u_{j,f}\} \cup X_j$
	for $j=1,\dots, n$, in $G'$.
	Since $H$ is $(d-1)$-degenerate, $G'$ is $d$-degenerate.

	Now, since $\ch_{sep}(H) = d$, there is a separated 
	$(d-1)$-list assignment $L_H$
	of $H$ for which there is no proper $L_H$-coloring. 
	Similarly, there is a separated
	$d$-list assignment $L_G$ for $G$ with no proper $L_G$-coloring.
	We may assume that for every edge $f$ of $G$, the
	vertices of $f$ contain exactly one common color $a_f$ in their lists
	(since otherwise we may discard the edge $f$ from $G$).
	Moreover, by permuting the colors for the list assignments
	$L_H$ and $L_G$, we may assume that they use disjoint sets of colors.

	We define a separated list assignment $L$ of $G'$ in the following way.
	Consider the set of edges $f$ where all vertices in $f$ contain the common color $a_f$ in their lists.
	Let $b^f_1,\dots, b^f_n$ be a set of colors  used by neither $L_G$ nor $L_H$.
	For $v \in f$ contained in $X_i$, 
	we set $$L(v)=L_G(v) \cup \{b^f_i \} \setminus \{a_f\}.$$
	For a vertex $u_{i,f} \in V(H_f)$ contained in an edge of the form 
	$\{u_{i,f}\} \cup X_i$, we set $L(u_{i,f}) = L_H(v) \cup \{b^f_i\}$.
	By repeating this procedure for every color used by $L_G$, and updating the list assignment $L$ when
	processing new colors used by $L_G$,
	we obtain a separated $d$-list assignment $L$ for $G'$.
	
	Let us prove that $G'$ is not $L$-colorable. Assume to the contrary, that
	there is an $L$-coloring $\varphi'$ of $G'$. 
	Let $\varphi$ be the $L_G$-coloring of $G$ induced by $\varphi'$ (where a vertex of $f$ that is in $X_i$ 
	thus is colored $a_f$ under $\varphi$ instead of $b^f_i$).
	Now, since $G$ is not separation $d$-choosable,
	there is some edge $f =\{v_1,v_2, \dots, v_{r'}\}$ of
	$G$ for which all vertices gets the same color $a_f$ under $\varphi$.
	Under $\varphi'$, this means that all vertices in the part $X_i$ of
	$f$ are colored $b^f_i$, $i=1,\dots, n$.
	Now, if $\varphi'(u_{i,f}) = b^f_i$ for some $i$, then there is a monochromatic
	edge under $\varphi'$, a contradiction. On the other hand, if
	there is no $i$ such that
	$\varphi'(u_{i,f})= b^f_i$, then this implies that there is an $L_H$-coloring
	of $H$, a contradiction in this case as well.
	In conclusion, $G'$ is $d$-degenerate and not separation $d$-choosable,
	which proves the theorem.
	\end{proof}

	We note that the following problem remains open.

\begin{problem}
	For every positive integer $d$, is there a non-$d$-degenerate hypergraph where 
	$\ch_{sep}(H_{k}) = \ch_{ad}(H_{k}) = 
		\single(H_{k})= \ch(H_{k}) = \chi_{DP}(H) =d+1$?
\end{problem}

The following is also open to the best of our knowledge, as mentioned above.

\begin{problem}
	 For every $r, k \geq 1$, is there an $r$-uniform hypergraph $H_{r,k}$ such that
	$\ch_{ad}(H_{r,k}) - \ch_{sep}(H_{r.k}) \geq k$?
\end{problem}

	As noted above, the choice number of an $r$-uniform hypergraph does not grow with the
	average degree, but this is the case for linear $r$-uniform hypergraphs. 
	Our final question
	is whether this also holds for separation choosability and adaptable choosability.

\begin{problem}
	Is it true that that there are increasing functions $f,g$ such that 
	for a linear $r$-uniform hypergraph $H$ of average degree $d$, $\ch_{sep}(H) = \Omega(f(d))$
	and $\ch_{ad}(H) = \Omega(g(d))$?
\end{problem}



\section{A probablistic model of single conflict coloring}
	
	In \cite{BernshteynDominikKaulMudrock}, the authors introduced
	a random model of DP-coloring of graphs. Inspired by this study we introduce a similar
	model for single conflict coloring of $r$-uniform hypergraphs. 
	We do not introduce the model from 
	\cite{BernshteynDominikKaulMudrock} formally here, but refer
	to that paper for details. Moreover, our formal introduction
	of probabilistic single conflict coloring 'translates' naturally to the setting of probabilistic DP-coloring,
	where, informally speaking, every edge instead yields $k$ possible conflict pairs.

First, we formally define what we mean by a random local $k$-partition.

\begin{definition}[Random local $k$-partition]
    Suppose $H$ is an $r$-uniform hypergraph with $n$ vertices and $m$ edges. For each edge 
	$e=\{v_1, \dots, v_r\}\in E(H)$, let the $r$-tuple $c(e)=(c_{v_1}(e), \dots, c_{v_r}(e))\in [k]^r$ 
	be chosen uniformly at random, 
	independently of the choice for the other edges in $H$. The resulting local 
	$k$-partition $\cv$ of $H$ is called  a \textbf{random local $k$-partition.}
\end{definition}

Given a random local $k$-partition $\{c_v\}$ of $H$, we say that $H$ is \emph{single conflict $k$-colorable with probability $p$} if $\P[H \text{ is conflict }\{c_v\}\text{-colorable}]=p$. 
Typically, we shall be interested in conditions on $k$ that imply that an $n$-vertex $r$-uniform hypergraph is single
conflict $k$-colorable with probability tending to $1$ as $n \to \infty$. More generally,
    an event $A_n$ occurs 
	\emph{with high probability} 
    (abbreviated {\em whp})
	if $\lim_{n \to \infty} \mathbb{P}[A_n] = 1$.

In the following, we shall derive several such analogues of results obtained in \cite{BernshteynDominikKaulMudrock}.
We begin by stating the following analogue of Proposition 1.1 in \cite{BernshteynDominikKaulMudrock},
which gives a lower bound on $k$ for a hypergraph to be single conflict $k$-colorable whp.
In the following, $\rho(H)$ is the maximum 
density of $G$, i.e.\ $\rho(H)=\max_{\emptyset \neq H\subseteq G}|E(H)|/|V(H)|$.

\begin{proposition} 
\label{Proposition: Near 0 prob for sing}
    Let $\varepsilon>0$ and $H$ be a nonempty $r$-uniform hypergraph with maximum density $\rho(H)\geq \exp(1/\varepsilon^r)$. If 
    $$
    1\leq k\leq \left(\frac{\rho(H)}{\log \rho(H)}\right)^{1/r},
    $$ 
    then $H$ is single conflict $k$-colorable with probability at most $\varepsilon$.
\end{proposition}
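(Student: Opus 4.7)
The plan is a first-moment / union bound argument applied to a densest subhypergraph. First, I pick a nonempty subhypergraph $H' \subseteq H$ achieving the maximum density, and write $n' = |V(H')|$, $m' = |E(H')|$, $d := \rho(H) = m'/n'$. Because the conflict tuples $c(e)$ are sampled independently across edges of $H$, the restriction of $\cv$ to $E(H')$ is itself a random local $k$-partition of $H'$; moreover, any conflict $\cv$-coloring of $H$ restricts to a conflict coloring of $H'$, so it suffices to bound $\P[H'\text{ is conflict }\cv\text{-colorable}]$. For a fixed map $\varphi : V(H') \to [k]$ and an edge $e = \{u_1,\dots,u_r\} \in E(H')$, the ``bad'' event $\{\varphi(u_i) = c_{u_i}(e) \text{ for all } i\}$ has probability exactly $k^{-r}$ since $c(e)$ is uniform on $[k]^r$, and these events are independent across edges. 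A union bound over the $k^{n'}$ candidate maps $\varphi$, combined with $1-x \leq e^{-x}$, gives
\[
\P[H' \text{ is conflict } \cv\text{-colorable}] \;\leq\; k^{n'}(1 - k^{-r})^{m'} \;\leq\; \exp\bigl(n'(\log k - d/k^r)\bigr).
\]

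Next I would plug in the hypothesis $k^r \leq d/\log d$, which yields $d/k^r \geq \log d$ and $\log k \leq \tfrac{1}{r}\log(d/\log d) \leq \tfrac{1}{r}\log d$, hence $d/k^r - \log k \geq \tfrac{r-1}{r}\log d$. Combining this with $\log d \geq 1/\varepsilon^r$ (the density hypothesis) and $n' \geq 1$, the bound becomes $\exp\!\bigl(-(r-1)/(r\varepsilon^r)\bigr)$, and the remaining task is the numerical inequality $\exp\!\bigl(-(r-1)/(r\varepsilon^r)\bigr) \leq \varepsilon$, i.e.\ $\varepsilon^r\log(1/\varepsilon) \leq (r-1)/r$. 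A short calculus check shows that $x \mapsto x^r\log(1/x)$ on $(0,1)$ is maximized at $x = e^{-1/r}$ with value $1/(er) < 1/2 \leq (r-1)/r$ for all $r \geq 2$, so the inequality holds. The only thing that requires care is the bookkeeping: the hypothesis $\rho(H) \geq \exp(1/\varepsilon^r)$ is calibrated precisely to absorb both the entropy term $\log k$ in the union bound and the factor $(r-1)/r$ lost when separating $d/k^r$ from $\log k$, so the estimate has essentially no slack and any weaker density assumption would force a different argument.
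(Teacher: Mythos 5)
Your proposal is correct and follows essentially the same route as the paper: reduce to a densest subhypergraph, apply a first-moment/union bound over all $k^{n'}$ colorings using $(1-k^{-r})^{m'}\le e^{-m'/k^r}$, and convert the hypothesis on $k$ into a negative exponent absorbed by $\rho\ge\exp(1/\varepsilon^r)$. The only (immaterial) difference is the final bookkeeping — you retain the $\tfrac{r-1}{r}\log d$ term and verify $\varepsilon^r\log(1/\varepsilon)\le\tfrac{r-1}{r}$ for $r\ge 2$, while the paper keeps the $\tfrac1r\log\log\rho$ term instead — so your closing remark that the density hypothesis has ``no slack'' is not quite accurate, but the proof is sound.
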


For $r=2$, Proposition \ref{Proposition: Near 0 prob for sing} implies that a graph $G$ is single conflict $k$-colorable with probability at most $\varepsilon$, given that $\rho(G)\geq \exp(1/\varepsilon^2)$, and 
$$
k\leq \sqrt{\frac{\rho(G)}{\log\rho(G)}}.
$$ 
The analogous bound for DP-colorability from a random
cover with probability at most $\varepsilon$ in \cite{BernshteynDominikKaulMudrock}
is $\frac{\rho(G)}{\log \rho(G)}$.

The proof of Proposition  \ref{Proposition: Near 0 prob for sing} is a simple
first moment argument, similar to the proof of Proposition 1.1 in \cite{BernshteynDominikKaulMudrock}.
To give the reader some intuition of our probabilistic single conflict coloring model, we provide a brief sketch.
In the proof, it will be of importance that $\rho(H)$ can be assumed to be the density of $H$, that is, $\rho(H)=|E(H)|/|V(H)|$. This follows from the observation below, the proof of which we omit
(see \cite{BernshteynDominikKaulMudrock} or \cite{Eriksson} for details). 
Let $p(H,k)=\P[H \text{ is single conflict }k\text{-colorable}]$, for an $r$-uniform hypergraph $H$.
\begin{observation} 
\label{Observation: p'>p}
    If $H'$ is an $r$-uniform subhypergraph of $H$, then for all $k\geq 1$, we have
    $$
    p(H,k)\leq p(H',k).
    $$ 
\end{observation}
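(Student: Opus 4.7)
The plan is a straightforward coupling argument: I will construct, on a common probability space, random local $k$-partitions of $H$ and $H'$ whose marginal distributions are correct, and then verify deterministically that conflict-colorability of $H$ implies conflict-colorability of $H'$ on this coupled space. The probability inequality then follows from the inclusion of events. The only real point of care will be to check that the induced partition on $H'$ is genuinely distributed as a random local $k$-partition of $H'$; I expect this to fall out of the independence of $c(e)$ across edges.

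Concretely, first I would sample a random local $k$-partition $\cv$ of $H$ by drawing $c(e) \in [k]^r$ uniformly and independently for each $e \in E(H)$, and then define the induced partition $\{c'_v\}_{v \in V(H')}$ on $H'$ by setting $c'_v = c_v|_{E_{H'}(v)}$; this is well-defined because $E_{H'}(v) \subseteq E_H(v)$ when $H'$ is a subhypergraph of $H$. Since the subfamily $(c(e))_{e \in E(H')}$ is itself i.i.d.\ uniform on $[k]^r$, the induced partition $\{c'_v\}$ has exactly the law of a random local $k$-partition of $H'$, as required by the definition.

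Next I would establish the deterministic implication: if $\varphi : V(H) \to [k]$ is a conflict $\cv$-coloring of $H$, then $\varphi|_{V(H')}$ is a conflict $\{c'_v\}$-coloring of $H'$. This is essentially immediate, since any edge $e = \{u_1, \dots, u_r\} \in E(H')$ is also an edge of $H$, so some index $i$ satisfies $\varphi(u_i) \neq c_{u_i}(e) = c'_{u_i}(e)$; the same vertex $u_i$ witnesses that $e$ is not in conflict under the restricted coloring. Thus on the coupled space the event that $H$ is conflict $\cv$-colorable is contained in the event that $H'$ is conflict $\{c'_v\}$-colorable, and taking probabilities yields $p(H,k) \leq p(H',k)$. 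There is no genuine obstacle here beyond setting up the coupling cleanly and not conflating the two sources of randomness; everything else is a one-line edge-by-edge check.
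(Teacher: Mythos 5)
Your coupling argument is correct: restricting an i.i.d.\ uniform local $k$-partition of $H$ to the edges of $H'$ yields a correctly distributed random local $k$-partition of $H'$, and any conflict $\cv$-coloring of $H$ restricts to a conflict coloring of $H'$ since every edge of $H'$ is an edge of $H$. The paper omits the proof entirely (deferring to \cite{BernshteynDominikKaulMudrock} and \cite{Eriksson}), and your argument is exactly the standard monotonicity proof intended there.
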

\begin{proof}[Proof of Proposition \ref{Proposition: Near 0 prob for sing}]
    Let $\varepsilon>0$, fix $k\leq (\rho(H)/\log \rho(H))^{1/r}$ and let $\rho=\rho(H)$ for an $r$-uniform hypergraph $H$ with $n$ vertices and $m$ edges. By Observation \ref{Observation: p'>p}, we may assume that $\rho=m/n$.

    Let $\{c_v\}$ be a random local $k$-partition of $H$, and enumerate all vertex colorings $\{\varphi_i\}_1^{k^n}$ of $H$. Let $E_i$ be the event that $\varphi_i$ is a conflict $\{c_v\}$-coloring and $X_i$ the indicator random variable for $E_i$. Set $X=\sum_{i\in [k^n]}X_i $; then
    $\E[X_i] 
    =(1-1/k^r)^m$,
    so the expected number of conflict $\{c_v\}$-colorings of $H$ is
    \begin{align*}
    \E[X] 
	&=k^n(1-1/k^r)^m\\
	&=\exp(n(\log k+\rho \log(1-1/k^r))) \\
    	&\leq \exp(n(\log k-\rho/k^r)).
    \end{align*}
    
    Since $k\leq (\rho/\log \rho)^{1/r}$, we have 
	$\log k \leq \frac{1}{r}\log \rho-\frac{1}{r}\log \log \rho$, which implies that
    \begin{align*}
    \log k - \rho/k^r 
    &\leq -\frac{1}{r}\log \log \rho,
    \end{align*}
    so
    $\E[X] 
    \leq \exp\left(-\frac{1}{r}\log \log \rho\right)$.
    Invoking that $\rho\geq \exp(1/\varepsilon^r)$, we deduce
	that this quantity is at most $\varepsilon$, so the desired result follows
	from Markov's inequality.
\end{proof}


Next, we shall prove an analogue of a result in \cite{BernshteynDominikKaulMudrock}, 
relating
single conflict colorability of hypergraphs to the degeneracy.
Consider the following procedure, which is a hypergraph version of a simple algorithm described in
\cite{BernshteynDominikKaulMudrock}.

Let $H$ be an $r$-uniform hypergraph on $n$ vertices with an ordering of the vertices $(v_i)_{i\in [n]}$ and let $d_i^-$ be the \emph{back degree of $v_i$}, which is equal to the degree of $v_i$ in the hypergraph induced by vertices whose indices are less than or equal to $i$. If $H$ is $d$-degenerate, there is an ordering such that $d_i^-\leq d$ for all $i\in [n]$.

\begin{procedure} 
\label{Procedure: (Hypergraphs) Greedyprocedure}
Our input is a $d$-degenerate $r$-uniform hypergraph $H$ and a local 
$k$-partition $\{c_v\}$ of $H$. Let $n=|V(H)|$.
\begin{enumerate}
    \item Order the vertices $V(H)$ as $(v_i)_{i\in [n]}$ so that $d_i^-\leq d$ for all $i\in [n]$.
    \item Set $\varphi(v_1)=1$ and repeat as $i$ goes from $2$ to $n$:
    \begin{enumerate}
        \item Let $L(v_i)=[k]\setminus \{c_{v_i}(e)\setdiv \varphi(u)=c_{u}(e) \text{ for all } u\in e\setminus\{v_i\}\}$.
        \item If $L(v_i)\neq \emptyset$, let $\varphi(v_i)$ be the smallest integer in $L(v_i)$. Otherwise, let $\varphi(v_i)=1$.
    \end{enumerate}
    \item Output $\varphi$.
\end{enumerate}
\end{procedure}

\vspace{\baselineskip}

Next, as in \cite{BernshteynDominikKaulMudrock}, we run Procedure \ref{Procedure: (Hypergraphs) Greedyprocedure} on a random local $k$-partition of $H$. 
For each $i\in [n]$ and $j\in [k]$, let $X_{ij}$ be the indicator random variable of the event $j\in L(v_i)$, that is, $j$ is an available color to choose from at step $i$, and let $Y_{ij}=1-X_{ij}$.

A family of $\{0,1\}$-valued random variables $Z_1,\dots, Z_s$ is {\em negatively correlated} if for every subset
$I \subseteq [s]$, we have
$$\P\left[\bigcap_{i\in I} \{Z_i=1\}\right] \leq \prod_{i \in I} \P[Z_i =1].$$
\begin{lemma}
    \label{Lemma: (Hypergraphs) (i) E and (ii) negatively correlation}
    The following holds for the set of variables $X_{ij}$ and $Y_{ij}$ defined above.
    \begin{enumerate}
        \item[(i)] For all $i\in [n]$ and $j\in [k]$, we have 
        $$
        \E[X_{ij}]\geq \left(1-\frac{1}{k^r}\right)^d.$$
        \item[(ii)] For each $i\in [n]$, the variables $(Y_{ij})_{j\in [k]}$ are negatively correlated.
    \end{enumerate}
\end{lemma}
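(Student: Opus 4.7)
My approach is to condition on the coloring $\varphi_{<i}$ of $\{v_1,\dots,v_{i-1}\}$ produced by Procedure \ref{Procedure: (Hypergraphs) Greedyprocedure}, and to exploit that $\varphi_{<i}$ is a function of the conflict tuples $c(f)$ over edges $f$ with $f\subseteq\{v_1,\dots,v_{i-1}\}$ only. Consequently $\varphi_{<i}$ is independent of $c(e)$ for every \emph{back-edge at $v_i$}, meaning an edge $e\ni v_i$ with $e\setminus\{v_i\}\subseteq\{v_1,\dots,v_{i-1}\}$; by construction there are exactly $d_i^{-}\le d$ such edges, and these are the only edges that can contribute to $L(v_i)$.

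For part (i), for each back-edge $e$ at $v_i$ and each $j\in[k]$ I would introduce the event
\[
A_e^{\,j}=\{c_{v_i}(e)=j\ \text{and}\ c_u(e)=\varphi_{<i}(u)\text{ for every }u\in e\setminus\{v_i\}\}.
\]
Conditional on $\varphi_{<i}$, each $A_e^{\,j}$ has probability exactly $1/k^{r}$ (we specify the $r$ independent uniform coordinates of $c(e)$), and the events for distinct back-edges are independent because they depend on disjoint conflict tuples. Since $X_{ij}=1$ iff no $A_e^{\,j}$ occurs,
\[
\P[X_{ij}=1\mid\varphi_{<i}]=(1-1/k^{r})^{d_i^{-}}\ge(1-1/k^{r})^{d},
\]
and (i) follows on taking expectation.

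For part (ii), I would encode the relevant randomness at each back-edge $e$ by a single variable $F_e\in[k]\cup\{\star\}$: set $F_e=c_{v_i}(e)$ if $c_u(e)=\varphi_{<i}(u)$ for every $u\in e\setminus\{v_i\}$, and $F_e=\star$ otherwise. Then $Y_{ij}=\1[\exists\,\text{back-edge } e \text{ at } v_i\text{ with }F_e=j]$. Conditional on $\varphi_{<i}$, the $F_e$'s are i.i.d.\ with $\P[F_e=j]=1/k^{r}$ for each $j\in[k]$ and $\P[F_e=\star]=1-k^{1-r}$; crucially, this law is the same for every value of $\varphi_{<i}$, so the $F_e$'s are i.i.d.\ with the same distribution unconditionally as well. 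The claim then reduces to a standard balls-in-bins negative-association fact: for fixed $e$ the vector $(\1[F_e=j])_{j\in[k]}$ is negatively associated (at most one component equals $1$); summing these independent NA vectors over the back-edges yields that $(N_j)_{j\in[k]}$ with $N_j=|\{e:F_e=j\}|$ is NA; and because $Y_{ij}=\1[N_j\ge1]$ is an increasing function of $N_j$ on disjoint coordinate subsets, $(Y_{ij})_{j\in[k]}$ inherits NA, which gives the required negative correlation.

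The main obstacle is conceptual rather than computational: it is to isolate the randomness in part (ii) so that negative correlation holds \emph{unconditionally}, not merely conditional on $\varphi_{<i}$. The $F_e$ encoding above does precisely this, because its joint distribution does not depend on $\varphi_{<i}$ at all; once that reduction is made, one either invokes the standard NA toolkit or, if preferred, proves the inequality directly by induction on $|S|$ using inclusion-exclusion in the resulting balls-in-bins picture.
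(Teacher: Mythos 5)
Your proof is correct. For part (i) you take essentially the same route as the paper: the paper defines the same events $A_{ej}$, notes $\P[A_{ej}]=1/k^{r}$, and passes to the product bound $(1-1/k^{r})^{d_i^-}$; your version is actually more careful, since you make explicit the step the paper only gestures at, namely that one must condition on $\varphi_{<i}$ (which is measurable with respect to the conflict tuples of edges contained in $\{v_1,\dots,v_{i-1}\}$) so that the events $A_e^{\,j}$ over distinct back-edges become genuinely independent --- without that conditioning a union bound would only give $1-d/k^{r}$, which is weaker than the stated $(1-1/k^{r})^{d}$. For part (ii) the comparison is different in kind: the paper omits the proof entirely, asserting it is ``completely identical'' to the corresponding DP-coloring argument in the cited reference, whereas you supply a complete, self-contained argument. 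Your reduction via the variables $F_e\in[k]\cup\{\star\}$ is the right move --- it packages the randomness so that the joint law of $(F_e)_e$ is a product law not depending on $\varphi_{<i}$, after which everything follows from the standard negative-association toolkit (a single ball among $k+1$ bins is NA, independent unions of NA vectors are NA, and coordinatewise monotone functions on disjoint index sets preserve NA, finally yielding the product inequality for $\P[\bigcap_{j\in I}\{Y_{ij}=1\}]$). This buys a cleaner and more reusable argument than an inclusion--exclusion computation, at the cost of invoking (or proving) the NA closure properties; either way it fills a gap the paper leaves to an external reference.
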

\begin{proof}
    (i): Fix $i\in [n]$ and let $u_1,\dots, u_t$ be the neighbors of $v_i$ preceding it in the ordering $(v_i)_{i\in [n]}$. For an edge $e\subseteq \{v_i,u_1,\dots, u_t\}$ and a color $j\in[k]$, let $A_{ej}$ be the event that 
    $$
    j=c_{v_i}(e) \text{ and } \varphi(u)=c_{u}(e), \text{ }\forall u \in e\setminus\{v_i\}.
    $$ 
    If $A_{ej}$ occurs for some edge $e$, then the color $j$ is not available to choose at step $i$. The number of edges containing $v_i$ and $r-1$ of the vertices $u_1,\dots, u_t$ is $d_i^-\leq d$. We deduce that
   $\E[X_{ij}]  
               \geq \left(1-\frac{1}{k^r}\right)^{d}$,
    since
    $$
    \P[A_{ej}]=\P[j=c_{v_i}(e) \text{ and } \varphi(u)=c_u(e) \text{ }\forall u\in e\setminus\{v_i\}] =\frac{1}{k^r},
    $$
	because the choice of $\varphi$ on $u_1,\dots, u_t$ is probabilistically independent from
	the choice of conflicts $c_{u}$ for the edge $e$.

    (ii): The proof of this fact is slightly technical and completely identical to the corresponding
	result for DP-coloring of graphs in \cite{BernshteynDominikKaulMudrock}, so we omit it.
\end{proof}

\vspace{\baselineskip}

We shall use the following
Chernoff-type bound for random variables satisfying negative correlation 
\cite{Molloy, PancosiSrinivasan}.

\begin{lemma}
\label{Lemma: C-H bound negative correlation}
    Let $(X_{i})_{i\in [k]}$ be binary random variables. Set $Y_{i}=1-X_i$ and $X=\sum_{i\in [k]}X_i$. If $(Y_i)_{i\in [k]}$ are negatively correlated, then
    $$
    \P[X<\E[X]-t]<\exp\left(-\frac{t^2}{2\E(X)}\right) \quad \text{for all } 0<t\leq \E[X].
    $$
\end{lemma}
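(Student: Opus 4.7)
The plan is to follow the classical Chernoff--Hoeffding recipe, reducing the lower-tail bound to the moment generating function of $X$ and then exploiting negative correlation of the $Y_i$ to recover the same MGF factorization one has in the independent case. For any $\lambda>0$, Markov's inequality applied to $e^{-\lambda X}$ gives
\[
\P[X<\E[X]-t]\leq e^{\lambda(\E[X]-t)}\,\E[e^{-\lambda X}],
\]
so the task reduces to bounding $\E[e^{-\lambda X}]$ from above and optimizing in $\lambda$ at the end.

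The heart of the argument is the claim that
\[
\E[e^{-\lambda X}]\leq\prod_{i\in[k]}\E[e^{-\lambda X_i}],
\]
just as in the independent case. To establish this, substitute $X_i=1-Y_i$, so that $e^{-\lambda X_i}=e^{-\lambda}\bigl(1+(e^{\lambda}-1)Y_i\bigr)$. Expanding the product over $i\in[k]$ and using linearity of expectation yields
\[
\E[e^{-\lambda X}]=e^{-\lambda k}\sum_{S\subseteq[k]}(e^{\lambda}-1)^{|S|}\,\E\!\left[\prod_{i\in S}Y_i\right].
\]
Since the $Y_i$ are $\{0,1\}$-valued, $\prod_{i\in S}Y_i=\1\!\left[\bigcap_{i\in S}\{Y_i=1\}\right]$, and the negative correlation hypothesis gives $\E[\prod_{i\in S}Y_i]\leq\prod_{i\in S}\E[Y_i]$. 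Crucially, the coefficients $(e^{\lambda}-1)^{|S|}$ are nonnegative for $\lambda\geq 0$, so the inequality passes termwise through the sum; refactorizing the resulting bound recovers $e^{-\lambda k}\prod_i\bigl(1+(e^{\lambda}-1)\E[Y_i]\bigr)=\prod_i\E[e^{-\lambda X_i}]$, as claimed.

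From here the proof becomes routine. Using $\E[e^{-\lambda X_i}]=1-(1-e^{-\lambda})\E[X_i]\leq\exp\!\bigl(-(1-e^{-\lambda})\E[X_i]\bigr)$ and multiplying over $i$, one obtains $\E[e^{-\lambda X}]\leq\exp(-(1-e^{-\lambda})\mu)$ where $\mu=\E[X]$. Choosing $\lambda=\log(\mu/(\mu-t))$ (valid because $0<t\leq\mu$) optimizes the Markov bound and produces the standard Chernoff expression in $\delta=t/\mu$, which is dominated by $\exp(-t^2/(2\mu))$ via the elementary inequality $(1-\delta)\log(1-\delta)+\delta\geq\delta^2/2$ on $[0,1)$, proved by checking that the difference vanishes at $0$ together with its first derivative and has nonnegative second derivative.

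The main obstacle is the MGF factorization step: one must notice that working with the substitution $X_i=1-Y_i$ is exactly what turns the otherwise alternating-sign expansion $\prod_i(1-(1-e^{-\lambda})X_i)$ into a polynomial in the $Y_i$ with nonnegative coefficients, so that the negative correlation of the $Y_i$ can be applied termwise. Once this factorization is in place, the remaining steps are a standard optimization of the Chernoff exponent, which is why the lemma holds with $\E[X]$ (rather than the weaker $\E[Y]=k-\E[X]$) in the denominator.
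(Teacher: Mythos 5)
The paper does not prove this lemma; it is quoted directly from the cited sources (Panconesi--Srinivasan, Molloy), so there is no in-paper argument to compare against. Your derivation is correct and is essentially the standard proof from those references: the substitution $X_i=1-Y_i$ turns the expansion of $\E[e^{-\lambda X}]$ into a combination of the terms $\E[\prod_{i\in S}Y_i]$ with nonnegative coefficients $(e^{\lambda}-1)^{|S|}$, the negative-correlation hypothesis (which, for binary variables, is exactly the statement $\E[\prod_{i\in S}Y_i]\leq\prod_{i\in S}\E[Y_i]$) applies termwise, and the remainder is the usual Chernoff optimization together with the inequality $(1-\delta)\log(1-\delta)+\delta\geq\delta^2/2$. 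The only point worth tidying is the endpoint $t=\E[X]$, where your choice $\lambda=\log(\mu/(\mu-t))$ is undefined; there the claim is immediate since $X\geq 0$ gives $\P[X<0]=0$.
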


We can now prove the following analogue of Theorem 1.3 in \cite{BernshteynDominikKaulMudrock},
which gives a sufficient condition for an $r$-uniform hypergraph to be single conflict $k$-colorable whp.
The proof is essentially the same as in \cite{BernshteynDominikKaulMudrock}; for completeness, we provide
a brief sketch.

\begin{theorem}
    \label{Theorem: d-degenerate r-uniform hypergraphs}
    For all $\varepsilon>0$, there is $n_0\in \Z_+$ and $\delta=\delta(\varepsilon)>0$ such that the following holds. Let $H$ be a $d$-degenerate $r$-uniform hypergraph with $n\geq n_0$ vertices such that 
$d\geq \log ^{1.1/\varepsilon}n$. If
    $$
    k\geq (r+\delta)^{1/r}\left(\frac{d}{\log d}\right)^{1/r},
    $$
    then $H$ is single conflict $k$-colorable with probability at least $1-\varepsilon$.
\end{theorem}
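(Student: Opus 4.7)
The plan is to run Procedure~\ref{Procedure: (Hypergraphs) Greedyprocedure} on $H$ together with a random local $k$-partition $\cv$, and to show that whp every step of the procedure has at least one available color. Letting $X_i=\sum_{j\in [k]}X_{ij}$ (with $X_{ij}$ as in Lemma~\ref{Lemma: (Hypergraphs) (i) E and (ii) negatively correlation}) denote the number of colors available at step $i$, if $X_i\geq 1$ for every $i\in[n]$, then the coloring $\varphi$ produced by the procedure is a legitimate conflict $\cv$-coloring of $H$, so it suffices to bound $\sum_{i\in [n]}\P[X_i=0]$ by $\varepsilon$.

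First I would compute $\E[X_i]$. By Lemma~\ref{Lemma: (Hypergraphs) (i) E and (ii) negatively correlation}(i), $\E[X_i]\geq k(1-1/k^r)^d$. The hypothesis $k^r\geq (r+\delta)d/\log d$ gives $d/k^r\leq (\log d)/(r+\delta)$ and $d/k^{2r}=o(1)$, so the estimate $\log(1-x)\geq -x-x^2$ yields
$$
(1-1/k^r)^d\geq \exp\left(-\frac{\log d}{r+\delta}-o(1)\right)= d^{-1/(r+\delta)-o(1)}.
$$
Combined with $k\geq (r+\delta)^{1/r}(d/\log d)^{1/r}$, this gives $\E[X_i]\geq (r+\delta)^{1/r}d^{\alpha-o(1)}/(\log d)^{1/r}$ where $\alpha:=1/r-1/(r+\delta)=\delta/(r(r+\delta))>0$. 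Choosing $\delta=\delta(\varepsilon,r)>0$ so that $\alpha$ is large enough (concretely, $1.1\alpha/\varepsilon>1+1/r$ is more than enough) and invoking $d\geq \log^{1.1/\varepsilon}n$ then guarantees $\E[X_i]\geq (\log n)^{1+\gamma}$ for some $\gamma=\gamma(\varepsilon,r)>0$, once $n$ is large enough.

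To turn this mean estimate into a tail bound, I would invoke Lemma~\ref{Lemma: (Hypergraphs) (i) E and (ii) negatively correlation}(ii): the variables $Y_{ij}=1-X_{ij}$ are negatively correlated within each row $i$, so Lemma~\ref{Lemma: C-H bound negative correlation} applies to the sum $X_i$. Setting $t=\E[X_i]-1$ gives
$$
\P[X_i=0]\leq \exp\left(-\frac{(\E[X_i]-1)^2}{2\E[X_i]}\right)=o(1/n),
$$
and a union bound over $i\in[n]$ makes $\P[\exists i:X_i=0]\leq \varepsilon$ for $n\geq n_0(\varepsilon,r)$. I expect the only delicate point to be the calibration of $\delta$: one has to ensure that $\alpha$ is large enough that the polylog lower bound on $\E[X_i]$ coming from $d\geq \log^{1.1/\varepsilon}n$ genuinely exceeds $\log n$ by enough to absorb the union bound; once $\delta$ is fixed, the rest is a routine first-moment plus concentration argument, essentially mirroring the graph case in \cite{BernshteynDominikKaulMudrock}.
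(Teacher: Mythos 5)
Your proposal is correct and follows essentially the same route as the paper's proof: run Procedure~\ref{Procedure: (Hypergraphs) Greedyprocedure} on a random local $k$-partition, lower-bound $\E[X_i]$ via Lemma~\ref{Lemma: (Hypergraphs) (i) E and (ii) negatively correlation}(i), apply the negative-correlation Chernoff bound (Lemma~\ref{Lemma: C-H bound negative correlation} together with part (ii)), and finish with a union bound over the $n$ steps. The only differences are cosmetic: you take $t=\E[X_i]-1$ where the paper takes $t=\E[X_i]/2$, and your calibration of $\delta$ through $\alpha=\delta/(r(r+\delta))$ is, if anything, a slightly more careful version of the paper's computation of the exponent $\frac{1}{r}-\frac{1+o(1)}{(r+\delta)}$.
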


\begin{proof}
    Fix $r$, $\varepsilon>0$, let $c=(r+\delta)^{1/r}$, assume $k$ satisfies the lower bound in the theorem, and let
	$X_i = \sum_{j \in [k]} X_{i,j}$. 
    
    By Lemma \ref{Lemma: (Hypergraphs) (i) E and (ii) negatively correlation} (i), we have
    $$
    \E[X_i]\geq k\left(1-\frac{1}{k^r}\right)^{d}\geq k\exp\left(-\frac{d}{k^r}(1+o(1))\right).
    $$
    Using the fact that $k\geq c(d/\log d)^{1/r}$, we get
    $$
    \E[X_i]\geq c\left(\frac{d}{\log d}\right)^{1/r}\exp\left(-\frac{1+o(1)}{c^r}\log d\right) =\frac{c}{\log^{1/r}d}\cdot d^{\frac{1}{r}-\frac{1+o(1)}{c^r}}>d^{99\epsilon/100},
    $$
    because
    $$
    \frac{1}{r}-\frac{1+o(1)}{c^r}= 
    \frac{\delta-o(1)}{c^r}>\varepsilon,
    $$
    for large enough $n$, 
	and
    $$
    \frac{\log^{1/r}d}{c}< d^{\varepsilon/100}, 
    $$
    which holds for large enough $n$. 
	
	Using Lemma \ref{Lemma: C-H bound negative correlation} and part (ii) 
	of Lemma \ref{Lemma: (Hypergraphs) (i) E and (ii) negatively correlation},
	we deduce that 
$$\P[X_i=0] \leq \P[X_i \leq \E[X_i]/2] \leq \exp \left(-\frac{d^{99\epsilon/100}}{8}  \right) < \varepsilon/n,$$
and by a union bound, it thus holds that
$\P[X_i = 0 \text{ for some $i$}]\leq \varepsilon$.
\end{proof}

For $r=2$, we roughly get the bound $k \geq 2 \sqrt{\frac{d}{\log d}}$ 
which ensures single $k$-conflict colorability
for ordinary graphs,
compared to the corresponding bound $k \geq  2 \left(\frac{d}{\log d}\right)$ for random
DP-colorability in \cite{BernshteynDominikKaulMudrock}.

\begin{corollary}
\label{Theorem: d-degenerate simple graphs}
Let $G$ be an $n$-vertex $d$-degenerate graph with $d\geq \log^{1.1/\varepsilon}n$, where $\varepsilon>0$. If
$$
k\geq \sqrt{(2+\varepsilon)}\sqrt{\frac{d}{\log d}},
$$
then whp $G$ is single conflict $k$-colorable. 
\end{corollary}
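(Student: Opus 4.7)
The plan is to derive this corollary as the $r=2$ case of Theorem \ref{Theorem: d-degenerate r-uniform hypergraphs}. For $r=2$ the theorem's lower bound $k\geq(r+\delta)^{1/r}(d/\log d)^{1/r}$ reads $k\geq\sqrt{2+\delta}\sqrt{d/\log d}$, which (after relabeling the role of $\delta$ as $\varepsilon$) matches the bound of the corollary; the degeneracy hypothesis $d\geq\log^{1.1/\varepsilon}n$ appears verbatim in both statements.

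Concretely, given $\varepsilon>0$ from the hypothesis of the corollary, I would apply Theorem \ref{Theorem: d-degenerate r-uniform hypergraphs} with $r=2$ and with an auxiliary failure-probability parameter $\varepsilon'>0$ chosen small enough that the implicit constant $\delta(\varepsilon')$ produced by that theorem satisfies $\delta(\varepsilon')\leq\varepsilon$. Inspection of the proof of Theorem \ref{Theorem: d-degenerate r-uniform hypergraphs} shows that $\delta(\varepsilon')\to 0$ as $\varepsilon'\to 0$ (essentially, one needs $\delta/(rc^r)>\varepsilon'$, so $\delta\sim 4\varepsilon'$ works for $r=2$), so such an $\varepsilon'$ always exists. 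With this choice, the assumption $k\geq\sqrt{2+\varepsilon}\sqrt{d/\log d}$ implies $k\geq\sqrt{2+\delta(\varepsilon')}\sqrt{d/\log d}$, verifying the hypothesis of Theorem \ref{Theorem: d-degenerate r-uniform hypergraphs}.

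The remaining point to address is that Theorem \ref{Theorem: d-degenerate r-uniform hypergraphs} as stated only guarantees success probability at least $1-\varepsilon'$, whereas the corollary asserts the stronger ``whp'' conclusion. This is not a genuine obstacle: looking at the last line of the proof, the failure probability actually produced is of the form $n\exp(-d^{99\varepsilon'/100}/8)$, which under $d\geq\log^{1.1/\varepsilon'}n$ reduces to $n\exp(-\Omega(\log^{1.089}n))=o(1)$ as $n\to\infty$. Hence the success probability tends to $1$, giving exactly the whp statement required. The main (and only) step is this parameter bookkeeping; all of the probabilistic content is carried by Theorem \ref{Theorem: d-degenerate r-uniform hypergraphs}.
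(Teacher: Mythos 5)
Your overall route --- reading the corollary off as the $r=2$ case of Theorem \ref{Theorem: d-degenerate r-uniform hypergraphs} --- is exactly what the paper intends (it gives no separate proof), and you are right to flag the two bookkeeping issues: the theorem's bound involves $\delta(\varepsilon)$ rather than $\varepsilon$ itself, and its stated conclusion is only ``probability at least $1-\varepsilon$'' rather than ``whp''. However, your resolution of the first issue breaks the second. In the proof of the theorem the two roles of the failure parameter are coupled: for $r=2$ and $c^2=2+\delta$ one needs
$$\frac{1}{2}-\frac{1+o(1)}{2+\delta}=\frac{\delta-o(1)}{2(2+\delta)}>\varepsilon',$$
so $\delta(\varepsilon')\approx 4\varepsilon'$, i.e.\ $\varepsilon'\approx\varepsilon/4$ once you insist on $\delta(\varepsilon')\leq\varepsilon$. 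But then the theorem's degeneracy hypothesis becomes $d\geq\log^{1.1/\varepsilon'}n\approx\log^{4.4/\varepsilon}n$, which is \emph{not} implied by the corollary's hypothesis $d\geq\log^{1.1/\varepsilon}n$. Your claim that the degeneracy hypothesis ``appears verbatim in both statements'' holds only before you replace $\varepsilon$ by $\varepsilon'$, and your final estimate $n\exp(-d^{99\varepsilon'/100}/8)=o(1)$ silently invokes the stronger, unavailable bound $d\geq\log^{1.1/\varepsilon'}n$.

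The gap is quantitative, not cosmetic. With $k=\sqrt{2+\varepsilon}\sqrt{d/\log d}$ the greedy analysis gives $\E[X_i]\approx d^{\varepsilon/(4+2\varepsilon)}/\sqrt{\log d}$, and if $d$ is as small as $\log^{1.1/\varepsilon}n$ this is of order $\log^{0.275}n=o(\log n)$, so the union bound $n\exp(-\E[X_i]/8)$ tends to infinity rather than to $0$; no choice of $\varepsilon'$ rescues this. To close the argument one must either strengthen the degeneracy hypothesis to roughly $d\geq\log^{(4.4+c)/\varepsilon}n$ (equivalently, state the hypothesis in terms of the correctly quantified $\varepsilon'$), or keep the theorem's constant $\sqrt{2+\delta}$ with $\delta\approx 4\varepsilon$ in the lower bound on $k$. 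This looseness is arguably inherited from the paper's own statement of the corollary, but your write-up does not repair it --- it conceals it by letting the exponent in the degeneracy condition drift from $1.1/\varepsilon$ to $1.1/\varepsilon'$.
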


We remark that the lower bound on $d$ in Theorem \ref{Theorem: d-degenerate r-uniform hypergraphs} and
Corollary \ref{Theorem: d-degenerate simple graphs} is necessary. This can be verified as in
\cite{BernshteynDominikKaulMudrock} (see Proposition 1.5).
Moreover,
similarly to \cite{BernshteynDominikKaulMudrock}, we believe that it is possible to prove
a better lower bound on $k$ for dense graphs. Specifically, we conjecture that
for a sufficiently dense $r$-uniform hypergraph $H$ 
the property of being single conflict colorable has a sharp threshold
at $\left(\frac{\rho(H)}{\log \rho(H)}\right)^{1/r}$.

For an $n$-vertex complete $r$-uniform hypergraph this suggests that
$k=\left(\frac{n^{r-1}}{r! \log n} \right)^{1/r}$ is
the threshold for single conflict colorability. We prove that
this is true for the case of ordinary graphs.


\begin{theorem} 
\label{Theorem: P close to 1 for simple graphs}
    For all $\varepsilon \in (0,1/2)$, there is $n_0\in \Z_+$ and $\delta=\delta(\varepsilon)>0$ such that the following holds. Suppose $G$ is a graph with $n\geq n_0$ vertices, $m\geq (1-o(1/\log ^2n)) n^2/2$ edges, and maximum density $\rho(G)=m/n$. If
    $$k\geq  (1+\delta)\sqrt{\frac{\rho(G)}{\log \rho(G)}},$$
    then $G$ is single conflict $k$-colorable with probability at least $1-\varepsilon$.
\end{theorem}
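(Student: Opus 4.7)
The plan is to prove the theorem by the second moment method applied to the random variable $X$ that counts the number of conflict $\{c_v\}$-free colorings $\varphi : V(G) \to [k]$. Fix $\delta = \delta(\varepsilon) > 0$ large enough that $(1+\delta)^2 > 2$. The first moment is $\E[X] = k^n(1-1/k^2)^m$, and using the hypothesis $k \geq (1+\delta)\sqrt{\rho/\log\rho}$ with $\rho = m/n$, the estimate
$$
\log \E[X] \approx n\log k - m/k^2
$$
shows $\E[X] \to \infty$ at a super-polynomial rate.

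For the second moment, parameterize pairs $(\varphi, \psi)$ by their overlap set $S = \{v : \varphi(v) = \psi(v)\}$, writing $|S| = s$ and $M_S = |E(G[S])|$. An edge-by-edge analysis (using that $\varphi$ and $\psi$ impose the same forbidden pattern on an edge $e$ iff both endpoints of $e$ lie in $S$, and otherwise impose two distinct patterns) gives
$$
\P[\varphi, \psi \text{ both valid}] = (1-1/k^2)^{M_S} (1-2/k^2)^{m-M_S},
$$
and hence
$$
\E[X^2] = k^n \sum_{s=0}^{n} (k-1)^{n-s} \sum_{|S|=s} (1-1/k^2)^{M_S}(1-2/k^2)^{m-M_S}.
$$
Writing $p = 1/k^2$ and expanding the relevant logarithms, the exponent in $\E[X^2]/\E[X]^2$ reduces at leading order to $pM_S - mp^2$. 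The key observation is that at the typical overlap $s \approx n/k$ and for typical $S$, $pM_S \approx p\binom{s}{2} \approx n^2/(2k^4)$ and $mp^2 \approx n^2/(2k^4)$, so these two leading terms cancel.

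The density hypothesis $m \geq (1-o(1/\log^2 n))n^2/2$ is tailored to preserve this cancellation: the deficit $\binom{n}{2} - m$ is $o(n^2/\log^2 n)$, which forces $p\,|M_S - \binom{s}{2}| = o(1)$ for typical $S$. Together with the standard concentration of the binomial weights $\binom{n}{s}k^{-s}$ around $s \approx n/k$ (which damps contributions from atypical overlaps), this yields $\E[X^2] = (1+o(1)) \E[X]^2$, and Chebyshev's inequality gives $\P[X = 0] \leq \E[X^2]/\E[X]^2 - 1 = o(1)$, establishing single conflict $k$-colorability with probability at least $1-\varepsilon$ for $n$ large enough. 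The main obstacle is the delicate second-moment cancellation: both $pM_S$ and $mp^2$ are of size $\Theta(\log^2 n)$, so all approximations must be carried out to precision $o(1)$; the density bound in the hypothesis is exactly the margin needed for the ratio $\E[X^2]/\E[X]^2$ to be $1+o(1)$, and passing from the complete-graph case to the nearly complete $G$ additionally requires concentration of $M_S$ about its mean over random $S$.
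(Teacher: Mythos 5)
Your plan is the same as the paper's: a Chebyshev/second-moment argument in which pairs of colorings are classified by their agreement set $S$, each edge contributes $(1-1/k^2)$ or $(1-2/k^2)$ according to whether both endpoints lie in $S$, and the decisive cancellation $\frac{1}{k^2}M_S-\frac{m}{k^4}=o(1)$ at the dominant overlap $s\approx n/k$ is exactly what the hypothesis $m\geq(1-o(1/\log^2 n))n^2/2$ buys (this is the paper's Case~3). Your condition $(1+\delta)^2>2$ is also the right one for $\E[X]\to\infty$. One small correction: no concentration of $M_S$ over random $S$ is needed; the deterministic bound $M_S\le\min\{\binom{s}{2},\rho s\}$ is an upper bound in the right direction and is all the paper uses.

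The genuine gap is the sentence asserting that ``standard concentration of the binomial weights $\binom{n}{s}k^{-s}$'' damps the atypical overlaps. That is where essentially all of the work lies, and it is not a routine concentration statement, because the weight at overlap $s$ has to beat the factor $\bigl(1+\frac{1}{k^2-2}\bigr)^{M_S}$, which for $s$ comparable to $n$ is of size $\exp\bigl(\Theta(s\log n)\bigr)$ --- the same exponential order as $k^{-s}$, so the outcome depends on the precise constant in the lower bound on $k$. Concretely, at $s=n$ the per-vertex factor is roughly $e\rho^{1/(1+\delta)^2}/k\approx \sqrt{\log n}\,n^{1/(1+\delta)^2-1/2}$, and making this less than $1$ re-uses (indeed slightly strengthens) your condition $(1+\delta)^2>2$; one must verify that a single $\delta$ works simultaneously for the first moment and for this regime. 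Likewise, just above and below $s=n/k$ the summands are not uniformly $o(1)$ but only $1+o(1)$ in aggregate, which is why the paper needs six separate ranges of $s$ (with carefully chosen cutoffs such as $n/k-\log n$ and $n/k+\log^{5/2}n$) to establish $\bigl(1-1/(k^2-1)^2\bigr)^m\sum_{s}g(s)\le 1+o(1)$. Your proposal identifies the correct skeleton but leaves this entire case analysis --- the substantive content of the proof --- unproved.
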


Again, the proof of this theorem is inspired by a result in \cite{BernshteynDominikKaulMudrock}.
In particular, we stress that the overall approach and many calculations are similar to the proof of 
Theorem 1.2 in that paper. However, the detailed confirmation of
convergence requires different arguments than the ones
in \cite{BernshteynDominikKaulMudrock}; thus, although
somewhat lengthy and heavily inspired by the proof in \cite{BernshteynDominikKaulMudrock},
we choose to give a complete proof of this theorem.

\begin{proof}
    Let $\varepsilon>0$ and consider a graph $G$ with $n$ vertices, $m= (1-o(1/\log^2n))n^2/2$ edges, and maximum density $\rho =\rho(G)$. Let $k=(1+\delta)\sqrt{\rho/\log \rho}$.

	Let $\{c_v\}$ be a random local $k$-partition and enumerate all vertex $k$-colorings 
	$\{\varphi_i\}_{i=1}^{k^n}$ of $G$. Let $E_i$ be the event that $\varphi_i$ is a conflict $\{c_v\}$-coloring and $X_i$ its indicator random variable, for $i\in [k^n]$, and set $X=\sum_{i=1}^{k^n}X_i$. 
We shall use Chebyshev's inequality in the following form to bound $\P[X>0]$ from below:
    $$
    \P[X>0]=1-\P[X=0]\geq 1-\frac{\Var[X]}{\E[X]^2}.
    $$
    Our task is thus to show that $\Var[X]/\E[X]^2 =o(1)$.    
	Notice that
    \begin{align*}
        \Var[X]=\sum_{(i,j)}\text{Cov}[X_i, X_j] 
        = \sum_{(i,j)} \left( \E[X_iX_j]-\E[X_i]\E[X_j]\right)
        =\sum_{(i,j)} \E[X_iX_j]-\E[X]^2,
    \end{align*}
    where the sums are taken over $(i,j)\in [k^n]^2$.
    The expectation of $X$ is
    $$
    \E[X]
    =k^n(1-1/k^2)^m=k^np^m, \quad \text{with }p:=1-1/k^2.
    $$
    Next, we compute $\E[X_iX_j]$. For an edge $uv\in E(G)$, 
    define, for $\ell \in \{i,j\}$, $A^\ell_{uv}$ to be the event that $\varphi_\ell(u)=c_{u}(uv)$ and $\varphi_\ell(v)=c_{v}(uv)$ does not hold, and let $E_{uv}=A^i_{uv}\cap A^j_{uv}$. We consider two cases.

    \vspace{\baselineskip}

    \textbf{Case A.} $\varphi_i(u)=\varphi_j(u)$ and $\varphi_i(v)=\varphi_j(v)$: \hfill \\ 
   In this case we have that
 $$
    \P[E_{uv}]=\P[A^i_{uv}]=1-1/k^2=p.
    $$
    
    \vspace{\baselineskip}

    \textbf{Case B.} 
	$\varphi_i(u)\neq \varphi_j(u)$: \hfill\\
In this case we have that 
    $$
    \P[E_{uv}]=\P[(c_u(uv),c_v(uv))\notin \{(\varphi_i(u),\varphi_i(v)), (\varphi_j(u), \varphi_j(v))\}] =\frac{k^2-2}{k^2}=:q.
    $$

    \vspace{\baselineskip}

    \newcommand{\al}{\alpha_{i,j}}
    \newcommand{\be}{\beta_{i,j}}
    \newcommand{\ga}{\gamma_{i,j}}
    Let the number of edges in $G$ satisfying the conditions from Cases A and B be $\al$ and $\be$, respectively. Then, since $\al+\be=m$, we have
    \begin{align*}
        \E[X_iX_j] 
        =p^{\al}q^{\be}
        =p^{2m}\left(1+\frac{1}{k^2-2}\right)^{\al}\left(1-\frac{1}{(k^2-1)^2}\right)^{m}.
    \end{align*}
    Thus, 
    \begin{align*}
        \frac{\Var[X]}{\E[X]^2}&=k^{-2n}p^{-2m} \sum_{(i,j)}\E[X_iX_j]-1\\
        &= k^{-2n}\left(1-\frac{1}{(k^2-1)^2}\right)^m\sum_{(i,j)}\left(1+\frac{1}{k^2-2}\right)^{\al}-1.
    \end{align*}

    \vspace{\baselineskip}

    Given $i$ and $j$, $\al$ is the number of edges $uv$ such that $\varphi_i(u)=\varphi_j(u)$ and 
	$\varphi_i(v)=\varphi_j(v)$. 
    We let $I_i=\{(v,\varphi_i(v)) \setdiv v\in V(G) \}$ and $I_j=\{(v,\varphi_j)\setdiv v\in V(G)\}$. 
Since the density of every subgraph $G$ is at most $\rho$, we conclude that if $|I_i\cap I_j|=\nu$, then
    $$\al \le \mu(\nu):=\min \left\{\binom{\nu}{2}, \rho\nu \right\}\leq  \min\left\{\frac{\nu^2}{2},\rho\nu\right\}.$$
There are $\binom{n}{\nu}k^n(k-1)^{n-\nu}$ pairs of colorings $\varphi_i$ and $\varphi_j$ such that $|I_i\cap I_j|=\nu$; thus
    \begin{align*}
        \frac{\Var[X]}{\E[X]^2} &\leq 
\left(1-\frac{1}{(k^2-1)^2}\right)^m\sum_{\nu=0}^n \left( \binom{n}{\nu}k^{-\nu}\left(1-\frac{1}{k}\right)^{n-\nu}\left(1+\frac{1}{k^2-2}\right)^{\mu(\nu)}\right)-1,
    \end{align*}
    and by defining 
    $$
    g(\nu)=\binom{n}{\nu}k^{-\nu}\left(1-\frac{1}{k}\right)^{n-\nu}\left(1+\frac{1}{k^2-2}\right)^{\mu(\nu)},
    $$ 
    the bound reads
    $$ 
    \frac{\Var[X]}{\E[X]^2}\leq \left(1-\frac{1}{(k^2-1)^2}\right)^{m}\sum_{\nu=0}^ng(\nu)-1.
    $$
We shall prove that $(1-1/(k^2-1)^2)^{m}\sum_{\nu=0}^ng(\nu)\leq 1+o(1)$, by considering several different
cases, depending on the magnitude of $\nu$. In many of the cases, we will use the following two estimates:
    \begin{align}
    \label{Equation: Bound on g(nu), simple graphs}
        g(\nu)\leq \left(\frac{ne^{1+1/k}e^{\mu(\nu)/(k^2-2)\nu}}{k\nu e^{n/k\nu}}\right)^\nu\leq \left((1+o(1))e^{\mu(\nu)/k^2\nu}\right)^\nu,
    \end{align}
and    
\begin{align}
    \label{Equation: Bound on (...)^m, simple graphs}
        \left(1+\frac{1}{(k^2-1)^2}\right)^m\leq 
	(1+o(1))e^{-m/k^4}.
    \end{align}

    \newcommand{\factorinfront}{\left(1+\frac{1}{(k^2-1)^2}\right)^m}

    \vspace{\baselineskip}

    \textbf{Case 1.} $0 \le \nu \le (1-\alpha)n/k$ for some $\alpha\in (0,1)$: \\
    With $\mu(\nu)\leq \frac{\nu(1-\alpha)n}{2k}$ in \eqref{Equation: Bound on g(nu), simple graphs}, we have
    $$
    g(\nu)\leq \exp\left(\nu(1+o(1))\frac{(1-\alpha)n}{2k^3}\right)=:r^\nu,
    $$
and so,
    \begin{align*}
        \sum_{0 \leq \nu \leq (1-\alpha)\frac{n}{k}}g(\nu)
	\leq \sum_{0 \leq \nu \leq (1-\alpha)\frac{n}{k}}r^\nu\leq r^{(1-\alpha)n/k}(1+o(1)).
    \end{align*}
    This, together with \eqref{Equation: Bound on (...)^m, simple graphs}, yield
    \begin{align*}
        \factorinfront \sum_{0 \leq \nu \leq (1-\alpha)\frac{n}{k}}g(\nu) &\le (1+o(1))\exp\left(-\frac{m}{k^4}+\frac{(1-\alpha)^2n^2}{2k^4}(1+o(1))\right) 
        =o(1),
    \end{align*}
    since $\alpha>0$ and $m=(1-o(1))n^2/2$.

    \vspace{\baselineskip}

    \textbf{Case 2.} $(1-\alpha)n/k \le \nu \le n/k -\log n$: \hfill \\
    Using Stirling approximation of $\nu!$, we get
    \begin{align*}
        g(\nu)\le \frac{1}{c\sqrt{\nu}}\left((1+o(1))\exp\left(\frac{1}{2k^2}\left(\frac{n}{k}-\log n\right)\right)\right)^{\nu},
    \end{align*}
    where $c>0$ is a constant. Next, since $\nu\geq (1-\alpha)n/k$ we have
    \begin{align*}
        \frac{1}{c\sqrt{\nu}} 
        \le D\exp\left(-\frac{n}{2k}\right),
    \end{align*}
    where $D=\frac{1}{c\sqrt{1-\alpha}}$. We bound $\sum g(\nu)$ as in Case 1 to get
    \begin{align*}
        \factorinfront \sum_{\frac{n}{k}(1-\alpha)\le \nu \le \frac{n}{k}-\log n}g(\nu) 
	& \le (1+o(1)) D e^{-m/k^4} e^{-n/2k} e^{(n/k-\log n)^2/2k^2}=o(1),
    \end{align*}
    because $k=\Omega(\sqrt{n/\log n})$ and $m=(1-o(1/\log ^2n))n^2/2$.

    \vspace{\baselineskip}

    \textbf{Case 3.} $n/k-\log n\le \nu \le n/k+\log ^{5/2}n$: \hfill \\
    As in Case 1, but with $\nu \le n/k + \log ^{5/2}n$, we get
    \begin{align*}
        \factorinfront\sum_{\frac{n}{k}-\log n\le \nu \le \frac{n}{k}+\log ^{5/2}n} g(\nu) &\le (1+o(1))\exp\left(\frac{m}{2k^4}+\frac{1}{2k^2}\left(\frac{n}{k}+\log^{5/2}n\right)^2\right) \\
        &\le (1+o(1))\exp\left(-\frac{n^2}{2k^4}o(1/\log ^2n)+\frac{n\log^{5/2}n}{k^3}+\frac{\log ^5n}{2k^2}\right) \\
        &=1+o(1).
    \end{align*}

    \vspace{\baselineskip}

    \textbf{Case 4.} $n/k+\log^{5/2}n\le \nu \le (1+\alpha)n/k$ for some $\alpha\in (0,1)$: \hfill \\
    Since $k=\Omega(\sqrt{n/\log n})$ we have for a constant $C>0$
    \begin{align*}
        \nu\geq \frac{n}{k}+\log^{5/2}n  
	=\sqrt{\log n}\left(\frac{\sqrt{n}}{C}+\log^2n\right),
    \end{align*}
    thus, with $\nu\leq (1+\alpha)n/k$ and $D=\frac{C}{1+o(1)}$
    \begin{align*}
        \frac{1}{1+\alpha}\le \frac{n}{k\nu} 
        \leq 
        1-\frac{C\log^2n}{\sqrt{n}+C\log^2n}=1-\frac{D\log^2n}{\sqrt{n}},
    \end{align*}
    where the first inequality implies that $e^{-n/k\nu}\leq 1$. We bound $g(\nu)$ as
    \begin{align*}
        g(\nu)&\leq 
        \left( (1+o(1))\left(1-\frac{D\log^2n}{\sqrt{n}}\right)\exp\left(\frac{n}{2k^3}(1+\alpha)(1+o(1))\right)\right)^\nu,
    \end{align*}
    which we use together with \eqref{Equation: Bound on (...)^m, simple graphs} and $1+x\leq e^x$ to get
    \begin{align*}
        &\factorinfront \sum_{\frac{n}{k}+\log^{5/2}n\le \nu \le \frac{n}{k}(1+\alpha)}g(\nu) \\
        &\leq(1+o(1))\exp\left(-\frac{m}{k^4}-\frac{D n \log^2n}{k \sqrt{n}} (1+\alpha)+\frac{n^2}{2k^4}(1+\alpha)^2(1+o(1))\right) \\
        &=(1+o(1))\exp\left( -\frac{n^2}{2k^4}o(1/\log^2n) -\frac{D(1+\alpha)\sqrt{n}\log^2n}{k} +(2\alpha+\alpha^2)\frac{n^2}{k^4}(1+o(1))\right) \\
        &=(1+o(1))\exp\left(o(1)-D'\log^{5/2}n+E\log^2n\right)
        =o(1),
    \end{align*}
    where $D',E>0$.

    \vspace{\baselineskip}

    \textbf{Case 5.} $(1+\alpha)n/k \le \nu \le n^{1-\varepsilon}$: \hfill \\
    Since $n/k\nu \leq 1/(1+\alpha)<1$, we get that
    $\frac{n}{k\nu}e^{-n/k\nu}\leq (1-\xi)/e$ for some $\xi\in(0,1)$, since $xe^{-x}$ has maximum $1/e$ at $x=1$. With $\mu(\nu)\leq \nu^2$, we write
    $$
    \exp\left(\frac{\mu(\nu)}{k^2-2}\right)\leq (1+o(1))\exp\left(\frac{\nu n^{1-\varepsilon}}{n/\log n}\right)
=(1+o(1))\exp\left(\nu (1+o(1))\right),
    $$
    so $g(\nu)\le \left((1+o(1))(1-\xi)\right)^\nu$ and thus, 
    $$
    \sum_{\frac{n}{k}(1+\alpha)\le \nu \le n^{1-\varepsilon}} g(\nu) \le (1+o(1))\frac{(1-\xi)^{(1+\alpha)\frac{n}{k}}}{\xi}=o(1).
    $$

    \vspace{\baselineskip}

    \textbf{Case 6.} $n^{1-\varepsilon}\le \nu \le n$: \hfill \\
 In this case we have that $\frac{n}{k\nu}\leq \frac{n^{\varepsilon}}{k}$ and 
$e^{-n/k\nu} 
=1+o(1)$. Furthermore, with $k\geq C\sqrt{n/\log n}$ for a constant $C>0$, we have
    \begin{align*}
        \exp\left(\frac{\mu(\nu)}{k^2-2}\right)
        \le(1+o(1))\exp\left(\frac{\nu}{2C^2}\log n\right).
    \end{align*}
    Hence, we get that
    \begin{align*}
        g(\nu)\le \left(e(1+o(1))n^{\varepsilon+1/2C^2-1/2}\sqrt{\log n}) \right)^{\nu} =o(1),
    \end{align*}
    as long as 
    $
    C>\sqrt{\frac{1}{1-2\varepsilon}}
    $. 

\bigskip

Summing up, we conlude from Cases 1-6 that $(1-1/(k^2-1)^2)^{m}\sum_{\nu=0}^ng(\nu)\leq 1+o(1)$,
and thus the theorem follows.
\end{proof}

The following follows from the preceding theorem and Proposition \ref{Proposition: Near 0 prob for sing}.

\begin{corollary}
	\label{cor:complete}
	For the complete graph $K_n$ the property of being single conflict colorable from a 
	random local $k$-partition has a sharp threshold at $k =\sqrt{\frac {n}{2 \log n}}$.
\end{corollary}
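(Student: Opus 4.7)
The plan is to obtain the sharp threshold by combining the two probability bounds proved earlier in the section: Proposition \ref{Proposition: Near 0 prob for sing} delivers the ``$0$-statement'' below the threshold, and Theorem \ref{Theorem: P close to 1 for simple graphs} delivers the ``$1$-statement'' above it. For $K_n$ the maximum density is $\rho(K_n)=\binom{n}{2}/n=(n-1)/2$, so
\[
\sqrt{\frac{\rho(K_n)}{\log \rho(K_n)}}=\sqrt{\frac{(n-1)/2}{\log((n-1)/2)}}=(1+o(1))\sqrt{\frac{n}{2\log n}},
\]
which matches the claimed threshold $k=\sqrt{n/(2\log n)}$ up to a $(1+o(1))$ factor.

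First I would establish the $0$-statement. Fix any $\varepsilon>0$ and suppose $k\leq (1-\delta)\sqrt{n/(2\log n)}$ for some $\delta>0$. For $n$ large enough one has $(1-\delta)\sqrt{n/(2\log n)}\leq \sqrt{\rho(K_n)/\log \rho(K_n)}$ and $\rho(K_n)=(n-1)/2\geq \exp(1/\varepsilon^2)$, so the hypotheses of Proposition \ref{Proposition: Near 0 prob for sing} (with $r=2$) are satisfied and $K_n$ is single conflict $k$-colorable with probability at most $\varepsilon$. Since $\varepsilon$ was arbitrary, this probability tends to $0$.

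Next I would establish the $1$-statement. Suppose $k\geq (1+\delta)\sqrt{n/(2\log n)}$. The number of edges of $K_n$ is $m=n(n-1)/2$, which satisfies $m=(1-1/n)n^2/2=(1-o(1/\log^2 n))n^2/2$, so the hypothesis $m\geq (1-o(1/\log^2 n))n^2/2$ of Theorem \ref{Theorem: P close to 1 for simple graphs} holds. Because $\sqrt{\rho(K_n)/\log \rho(K_n)}=(1+o(1))\sqrt{n/(2\log n)}$, for sufficiently large $n$ the assumption $k\geq (1+\delta)\sqrt{n/(2\log n)}$ implies $k\geq (1+\delta/2)\sqrt{\rho(K_n)/\log \rho(K_n)}$. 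Applying Theorem \ref{Theorem: P close to 1 for simple graphs} (with $\delta/2$ in place of $\delta$), $K_n$ is single conflict $k$-colorable with probability at least $1-\varepsilon$, for any prescribed $\varepsilon>0$ once $n$ is large.

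Neither step involves a real obstacle; the main subtlety is just confirming that the $(1+o(1))$ slack between $\sqrt{\rho(K_n)/\log \rho(K_n)}$ and $\sqrt{n/(2\log n)}$ can be absorbed into the $\delta$ appearing in each of the two results, and that $m=n(n-1)/2$ meets the density hypothesis of Theorem \ref{Theorem: P close to 1 for simple graphs}. Both observations are straightforward, and together they yield the sharp threshold at $k=\sqrt{n/(2\log n)}$ as claimed.
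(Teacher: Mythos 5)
Your proposal is correct and follows exactly the route the paper intends: the paper derives this corollary in one line by combining Proposition \ref{Proposition: Near 0 prob for sing} (the $0$-statement) with Theorem \ref{Theorem: P close to 1 for simple graphs} (the $1$-statement), which is precisely what you do, including the computation $\rho(K_n)=(n-1)/2$ and the absorption of the $(1+o(1))$ factors into $\delta$. Your verification that $m=n(n-1)/2$ satisfies the density hypothesis is a detail the paper leaves implicit, so nothing is missing.
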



\section{Palette sparsification of list coloring of hypergraphs}

	In this section we consider the related problem of 
	palette sparsification for list coloring of hypergraphs.
	The problem of coloring hypergraphs from randomly chosen 
	lists was recently initiated in \cite{Casselgren5},
	but was first studied for the case of ordinary graphs
	in the following setting.
 Assign lists of 
    colors to the vertices of a hypergraph $H = H(n)$ with 
    $n$ vertices by choosing for
    each vertex $v$ its list $L(v)$ independently and 
    uniformly at random
	   from all $k$-subsets of a
	   color set $\mathcal{C} = \{1,2,\dots,\sigma\}$.
	   Such a list assignment is called a 
	   \emph{random $(k,\sigma)$-list assignment} for $H$.
	Given that $k$ is a fixed integer, the question studied in \cite{Casselgren, Casselgren2, Casselgren3, Casselgren4,
	HefetzKrivelevich, Krivelevich1, Krivelevich2} is
	how large $\sigma = \sigma(n)$
	should be
	in order to guarantee that with 
    probability tending to $1$ as $n \to \infty$ 
    there is a proper coloring
    of the vertices of $H$ with colors 
    from the random list assignment. Analogous questions for hypergraphs are studied in \cite{Casselgren5}.

	Alternatively, we can fix $\sigma$ 
    (as a function of $n$), and ask how large $k=k(n)$ should be 
    to ensure that
    the random lists support a proper coloring with probability 
    tending to $1$.
	This question was first studied in
	\cite{AndrenCasselgrenOhman, CasselgrenHaggkvist}, and
	following the breakthrough result by Assadi et al.\ \cite{AssadiChenKhanna}
	this type of ``palette sparsification'' questions have recently
	received considerable attention in the setting of graphs
	\cite{AlonAssadi, Dhawan, JainPham, KahnKenney, KahnKenney2, KangKellyKuhnMethukuOsthus, 
	Keevash,  LuriaSimkin}.
	Analogous questions for hypergraphs are, however, almost completely unexplored so far.
	
	In \cite{Casselgren5}, the first author
	proved a palette sparsification result for complete $r$-uniform hypergraphs.
	Here we obtain a palette sparsification-type result for linear hypergraphs, that is,
	hypergraphs where any two vertices are contained in at most one edge.

\begin{theorem}
\label{th:main}
	Let $H =H(n)$ be a linear $r$-uniform hypergraph on $n$ vertices with maximum degree 
	$\Delta$.
	For every constant $C >  (2^r er)^{1/(r-1)}$, there is a constant $A >0$, such that if 
	$k \geq A (\log n)^{1/r}$, $\sigma \geq C \Delta^{1/(r-1)}$ and
	$L$ is random 
	$(k, \sigma)$-list assignment for $H$, 
	then $H$ is whp $L$-colorable.
\end{theorem}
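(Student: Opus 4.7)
The plan is to combine Drgas-Burchardt's deterministic list-colouring bound for linear $r$-uniform hypergraphs with a concentration argument on the random lists. Drgas-Burchardt's theorem yields $\mathrm{ch}(H)\leq (2^r er)^{1/(r-1)}\Delta^{1/(r-1)}$ for any linear $r$-uniform $H$ of maximum degree $\Delta$, and its proof proceeds via the Lov\'asz local lemma: colour each vertex independently and uniformly at random from its list, bound the probability that a given edge $e$ is monochromatic by $|\bigcap_{v\in e}L(v)|/\prod_{v\in e}|L(v)|$, and use that in a linear hypergraph each edge shares a vertex with at most $r\Delta$ others.

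The observation driving the sparsification result is that this LLL calculation only depends on the intersection sizes $|\bigcap_{v\in e}L(v)|$, not on the actual list sizes. Although the random lists have size only $k$, for a fixed edge $e$ one has $\mathbb{E}|\bigcap_{v\in e}L(v)|\approx \sigma(k/\sigma)^r=k^r/\sigma^{r-1}$, since a given colour lies in $L(v)$ with probability $k/\sigma$ for each $v$ and these events are independent across distinct vertices. So the random lists behave, for the purpose of the Drgas-Burchardt LLL argument, as if they were of effective size $\sigma$.

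Accordingly, I would structure the proof in two steps. First, show that with high probability every edge $e$ satisfies $|\bigcap_{v\in e}L(v)|\leq T$, where $T$ is within a factor $2^r$ of the expectation $k^r/\sigma^{r-1}$. When $k^r/\sigma^{r-1}$ is at least a constant this follows from a Chernoff-type bound, applied to the sum $\sum_{c\in[\sigma]}\mathbf{1}[c\in L(v)\text{ for all }v\in e]$, combined with a union bound over the $\leq n\Delta$ edges; the hypothesis $k\geq A(\log n)^{1/r}$ is exactly what is needed for this union bound to close. When $k^r/\sigma^{r-1}=o(1)$ one instead uses a Poisson-type tail bound of the form $\Pr[|\bigcap_{v\in e}L(v)|\geq t]\leq (\mathbb{E}/t)^t$ for a slowly growing $t$. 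Second, I would fix a "good" realisation of $L$ on the probability-$1-o(1)$ event and rerun the Drgas-Burchardt LLL argument verbatim: with $\Pr[e\text{ monochromatic}]\leq T/k^r$ and dependency degree at most $r\Delta$, the LLL condition $e\cdot r\Delta\cdot T/k^r\leq 1$ reduces, after substituting $T\leq 2^r k^r/\sigma^{r-1}$, to $\sigma^{r-1}\geq 2^r er\Delta$, which is precisely what the hypothesis $C>(2^r er)^{1/(r-1)}$ provides.

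The main obstacle will be carrying out Step~1 uniformly across the possible magnitudes of $\Delta$: the expected intersection $k^r/\sigma^{r-1}$ ranges from $\Theta(1)$ (when $\Delta$ is of order $\log n$) down to vanishingly small (when $\Delta$ is polynomial in $n$), forcing a case split in the concentration analysis. A further mild nuisance is that $L(v)$ is a uniform $k$-subset of $[\sigma]$ rather than a product of independent Bernoulli variables, but negative correlation allows the Chernoff-type upper-tail bounds to go through with the same constants. The constant $A$ in the hypothesis $k\geq A(\log n)^{1/r}$ will ultimately be chosen large enough to guarantee the union bound in Step~1, while the slack $C-(2^r er)^{1/(r-1)}>0$ absorbs the lower-order terms in the final LLL inequality.
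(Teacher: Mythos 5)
Your Step 2 does not close, and the problem is quantitative, not cosmetic. A symmetric local lemma with bad events ``$e$ is monochromatic'', maximum probability $T/k^r$ and dependency degree $r\Delta$ requires $e\cdot (T/k^r)\cdot r\Delta\le 1$, i.e.\ $T\le k^r/(er\Delta)$. Under the hypotheses of the theorem $k^r=\Theta(\log n)$ while $\Delta$ may be polynomial in $n$, so this forces $T<1$, i.e.\ \emph{every} edge must have empty list intersection. That fails with high probability: at $\sigma=C\Delta^{1/(r-1)}$ the probability that a fixed edge has a common colour is about $\sigma(k/\sigma)^r=k^r/\sigma^{r-1}\ge A^r\log n/(C^{r-1}\Delta)$, so already along a matching of $\Theta(n/r)$ disjoint (hence independent) edges one expects $\Theta(n\log n/\Delta)\to\infty$ edges with a common colour. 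Your algebraic reduction ``$T\le 2^rk^r/\sigma^{r-1}$ gives $\sigma^{r-1}\ge 2^r er\Delta$'' is correct but vacuous in this regime, since $2^rk^r/\sigma^{r-1}=o(1)$. Step 1 has a matching defect even in the benign regime $k^r/\sigma^{r-1}=\Theta(1)$: the upper tail $\P\bigl[|\bigcap_{v\in e}L(v)|\ge t\bigr]\le (e\,\E/t)^t$ is only a constant for constant $t$, so to survive a union bound over the $\le n\Delta$ edges you need $t=\Omega(\log n/\log\log n)$, which again destroys the LLL inequality. In short, $|\bigcap_{v\in e}L(v)|$ is a low-mean quantity that does not concentrate multiplicatively, and the product (max bad-event probability) $\times$ (worst-case dependency degree $r\Delta$) is the wrong functional to control.

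The repair is to run the local lemma in its asymmetric (colour-degree) form, which is exactly what Theorem~\ref{th:Drgas} packages: the relevant condition is $k^r\ge e(\Delta_{col}(H,L)+1)$, where $\Delta_{col}(e,L)$ is essentially $\sum_{f\sim e}|\bigcap_{v\in f}L(v)|$, i.e.\ the \emph{sum} of the neighbouring bad-event probabilities rather than their maximum times $r\Delta$. The quantity one then concentrates is the colour degree $d_c(v,H,L)$, whose mean is at most $\Delta(k/\sigma)^{r-1}\le k^{r-1}/C^{r-1}$ --- large enough (of order $(\log n)^{(r-1)/r}$) for a Chernoff bound, using linearity of $H$ for the required independence, to survive a union bound after discarding an $\alpha$-fraction of ``bad'' colours from each list. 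This yields $\Delta_{col}(e,L')\le r(1+\varepsilon)k^r/C^{r-1}$ and hence the condition $C^{r-1}>2^r er$. So your high-level instinct (random lists of size $k$ behave like deterministic lists of size $\sigma$ for an LLL argument) is right, but the instance of the LLL you invoke is too crude to exploit it; you must concentrate colour degrees, not edge-wise list intersections.
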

	
	Before proving this theorem, we briefly discuss this result.
	The lower bound $\Delta^{1/(r-1)}$ on $\sigma$ 
	matches the best known upper bound on the choice number
	of linear $r$-uniform hypergraphs, although this upper bound has been improved for the 
	ordinary chromatic
	number \cite{FriezeMubayi}. 
	It would interesting to investigate whether this lower bound on $\sigma$ for linear 
	$r$-uniform hypergraphs could be improved.

	We note that this theorem does not extend to general hypergraphs, 
	which the following example shows.
	Let  $A,C > 0$ be constants, $\sigma = C \Delta^{1/(r-1)}$, 
	$k = A (\log n)^{1/r}$, and consider a 
	random $(k,\sigma)$-list assignment for an $r$-uniform hypergraph $H=H(n)$ of order $n$ that 
	is a disjoint union of $\lfloor n/(k(r-1)+1)) \rfloor$ complete hypergraphs of order $k(r-1)+1$
	(and possibly some isolated vertices). 
	Note that the maximum degree $\Delta$ in $H$ is 
	$$\Delta= \binom{k(r-1)+1}{r-1} \sim \frac{(k(r-1)+1)^{r-1}}{(r-1)!},$$
	so $\sigma \sim \frac{C k(r-1)}{((r-1)!)^{1/(r-1)}}$.

Now, since $H$ is a disjoint union of $(k(r-1)+1)$-cliques, it is $L$-colorable if and only if
there is no clique such that all vertices get the same color list. 
Now, the probability that
all vertices in such a clique get the same list is 
$$\binom{\sigma}{k}^{-k(r-1)} \geq 2^{-Ck^2(r-1)}.$$
Thus the probability that there is no such clique in $H$ is at most 
$$
\left(1-2^{-Ck^2(r-1)}\right)^{\frac{n}{k(r-1)+1}}
\leq
\exp\left(-2^{-Ck^2(r-1)}\frac{n}{k(r-1)+1} \right),
$$
which tends to zero as $n \to \infty$, for any choice of the constants $A$ and $C$. 
Thus, whp $H$ is not $L$-colorable. In conclusion, Theorem \ref{th:main} does not extend to general
$r$-uniform hypergraphs, which answers a question from \cite{Casselgren5}.

	Finally, let us remark that for general $r$-uniform hypergraphs, it was proved in \cite{Casselgren5}
	that for every constant $C>0$ there is a (relatively small) 
	constant $A$ such that if $$\sigma \geq C \Delta^{1/(r-1)} \log n, \,
	k \geq A \log n,$$ and $L$ is a random $(k,\sigma)$-list assignment for an $n$-vertex $r$-uniform 
	hypergraph $H$ with maximum degree $\Delta$, then $H$ is whp $L$-colorable. 
	It would be interesting to investigate if the factor $\log n$
	could be removed from the bound on $\sigma$, which would be best possible (up to multiplicative constants) 
	by the example of complete $r$-uniform
	hypergraphs \cite{Casselgren5}.

\bigskip

We now turn to the proof of Theorem \ref{th:main}.
We shall use a Reed-type result on hypergraph list coloring involving color degrees
due to Drgas-Burchardt \cite{Drgas}, which does not appear to be very well-known. (Indeed, while working
on the proof of Theorem \ref{th:main}, we first proved a version of Theorem \ref{th:Drgas} below, but later discovered
the result of Drgas-Burchardt while preparing this manuscript.)

First we introduce (and simplify) some terminology from that paper.
For a hypergraph $H$ with a list assignment $L$, we denote by $H_c(L)$ the subgraph of $H$ induced by all edges where every vertex contains the color $c$ in its list. We denote by $d_{c}(v,H,L)$ 
the {\em color degree} of $(c,v)$ in $H$, that is, the number of edges containing
$v$ such that every vertex in these edges has the color $c$ in its list. Similarly, the {\em edge-color degree}
$d_{c}(e, H, L)$ of a pair $(e,c)$, where $e$ is an edge, is the sum
$$d_{c}(e,H,L) = \sum_{v \in e} d_{c}(v,H,L).$$
The {\em color-degree} $d_{col}(e)$ of an edge $e$ is defined as
$$d_{col}(e,L) = \sum_{c \in L(v) : v \in e} d_{c}(e,H,L).$$

We denote by $\Delta_{col}(H,L)$, the maximum color-degree of an edge in $H$. 
Using the Lovasz local lemma, Drgas-Burchardt proved the following theorem \cite{Drgas}.

\begin{theorem}
\label{th:Drgas}
	Let $H$ be an $r$-uniform hypergraph and $L$ a $k$-list assignment for $H$.
	If $k \geq \left(e (\Delta_{col}(H,L) +1) \right)^{1/r}$, then $H$ is $L$-colorable.
\end{theorem}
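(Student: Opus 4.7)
The plan is to prove this by a direct application of the symmetric Lov\'asz Local Lemma (LLL), as signaled by the sentence preceding the statement. First I would consider the random coloring obtained by picking, for each vertex $v$, a color $\varphi(v) \in L(v)$ independently and uniformly at random. The natural bad events are indexed by pairs $(e,c)$: for every edge $e \in E(H)$ and every color $c$ lying in $L(v)$ for \emph{every} $v \in e$ (i.e.\ every color that could witness $e$ being monochromatic), let $B_{e,c}$ be the event that $\varphi(v) = c$ for all $v \in e$. Avoiding all $B_{e,c}$ is equivalent to $\varphi$ being a proper $L$-coloring, and since the $\varphi(v)$ are independent with each $L(v)$ of size $k$, we have $\mathbb{P}[B_{e,c}] = 1/k^r$ uniformly.

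The main step is to bound the dependency degree. Two bad events $B_{e,c}$ and $B_{e',c'}$ depend on disjoint sets of variables — hence are mutually independent — whenever $e \cap e' = \emptyset$, so it suffices to count pairs $(e',c')$ with $e' \cap e \neq \emptyset$ and $c'$ common to every list in $e'$. I would estimate this by fixing $v \in e$ and summing: for each color $c' \in L(v)$, the number of edges $e' \ni v$ on which $c'$ is common to all lists is exactly $d_{c'}(v,H,L)$. Summing over $c' \in L(v)$ and then over $v \in e$ gives at most $\sum_{v \in e}\sum_{c' \in L(v)} d_{c'}(v,H,L)$, which by definition is $d_{col}(e,L)$, and hence at most $\Delta_{col}(H,L)$. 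Intersection in multiple vertices of $e$ only causes over-counting, which is harmless for an upper bound.

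Finally, with $p = 1/k^r$ and dependency degree $D \leq \Delta_{col}(H,L)$, the symmetric LLL condition $ep(D+1) \leq 1$ reads $e(\Delta_{col}(H,L)+1) \leq k^r$, which is precisely the hypothesis $k \geq (e(\Delta_{col}(H,L)+1))^{1/r}$. The LLL then guarantees a strictly positive probability that no $B_{e,c}$ occurs, and such a sample $\varphi$ is the desired $L$-coloring.

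I do not anticipate a genuine obstacle: the proof is a textbook LLL application. The only subtlety is matching the dependency count to the paper's compact definition of $d_{col}(e,L) = \sum_{c \in L(v):\, v \in e} d_c(e,H,L)$, and confirming that this expression really captures the quantity $\sum_{v \in e}\sum_{c' \in L(v)} d_{c'}(v,H,L)$ that arises naturally from the dependency analysis. This is a definitional unpacking rather than a true difficulty, and explains why $d_{col}$ (rather than a cruder maximum-degree statistic) is the "correct" parameter to feed into the LLL.
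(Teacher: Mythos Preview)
Your proposal is correct and follows exactly the approach the paper attributes to Drgas-Burchardt: a direct application of the symmetric Lov\'asz Local Lemma with bad events $B_{e,c}$, uniform probability $1/k^r$, and dependency degree bounded by $\Delta_{col}(H,L)$. Note that the paper does not actually reproduce the proof of Theorem~\ref{th:Drgas} but only cites it from~\cite{Drgas}; your unpacking of $d_{col}(e,L)$ as $\sum_{v\in e}\sum_{c'\in L(v)} d_{c'}(v,H,L)$ is the correct reading of the paper's compact definition (since $d_c(v,H,L)=0$ whenever $c\notin L(v)$), and this is precisely the dependency count needed for the LLL condition $e(\Delta_{col}(H,L)+1)\le k^r$.
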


We shall also use a standard version of Chernoff's inequality for binomially distributed random variables,
see e.g.~\cite{Janson}. 

\begin{theorem}
Let $X= X_1 + \dots + X_n$ be a sum of $n$ identically Bernoulli distributed random variables $X_1,\dots, X_n$.
For any $\varepsilon >0$,
$$\mathbb{P}[X-\mathbb{E}[X] \geq \varepsilon\mathbb{E}[X]] \leq
\exp\left(-\frac{\varepsilon^2}{3}\mathbb{E}[X]\right).$$
\end{theorem}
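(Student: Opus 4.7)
The plan is the classical exponential Markov (Chernoff) argument. Set $p = \P[X_1=1]$ and $\mu = \E[X] = np$. For any real $t>0$, Markov's inequality applied to the non-negative random variable $e^{tX}$ gives
$$\P[X \geq (1+\varepsilon)\mu] \;=\; \P[e^{tX} \geq e^{t(1+\varepsilon)\mu}] \;\leq\; e^{-t(1+\varepsilon)\mu}\,\E[e^{tX}].$$
Using independence of the $X_i$ together with the elementary inequality $1+x \leq e^x$, one obtains
$$\E[e^{tX}] \;=\; (1-p+pe^t)^n \;\leq\; \exp\bigl(np(e^t-1)\bigr) \;=\; \exp\bigl(\mu(e^t-1)\bigr),$$
so
$$\P[X \geq (1+\varepsilon)\mu] \;\leq\; \exp\bigl(\mu\bigl(e^t - 1 - t(1+\varepsilon)\bigr)\bigr).$$

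Next I would optimize over $t$. Differentiating the exponent with respect to $t$ and setting the derivative to zero yields the canonical choice $t = \log(1+\varepsilon)$; substituting back gives
$$\P[X \geq (1+\varepsilon)\mu] \;\leq\; \exp\bigl(\mu(\varepsilon - (1+\varepsilon)\log(1+\varepsilon))\bigr) \;=\; \left(\frac{e^\varepsilon}{(1+\varepsilon)^{1+\varepsilon}}\right)^{\mu}.$$
Getting the stated clean form $\exp(-\varepsilon^2\mu/3)$ then reduces to the purely analytic inequality
$$(1+\varepsilon)\log(1+\varepsilon) - \varepsilon \;\geq\; \varepsilon^2/3,$$
which I would establish by letting $f(\varepsilon) := (1+\varepsilon)\log(1+\varepsilon) - \varepsilon - \varepsilon^2/3$, noting $f(0) = 0$, $f'(\varepsilon) = \log(1+\varepsilon) - 2\varepsilon/3$ with $f'(0)=0$, and then showing $f'(\varepsilon) \geq 0$ on the relevant range via the Taylor expansion $\log(1+\varepsilon) = \varepsilon - \varepsilon^2/2 + \varepsilon^3/3 - \cdots$ or by analyzing $f''(\varepsilon) = 1/(1+\varepsilon) - 2/3$.

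The main obstacle is this final analytic step: the Bernstein-type form $-\varepsilon^2/3$ of the exponent is the sharp tractable expression in the regime of moderate $\varepsilon$ (roughly $\varepsilon \leq 1$), which is the range in which the result is applied earlier in the paper; for very large $\varepsilon$ one customarily supplements this by a linear bound of the form $\exp(-c\varepsilon\mu)$, but this is not needed for our applications. Everything else in the argument — the Chernoff moment-generating-function computation, the use of independence, and the optimization in $t$ — is entirely routine once the exponential Markov framework is in place.
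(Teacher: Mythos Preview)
Your argument is the standard Chernoff proof and is essentially correct, including your caveat about the range of $\varepsilon$ (the inequality $(1+\varepsilon)\log(1+\varepsilon)-\varepsilon\ge\varepsilon^2/3$ indeed fails for large $\varepsilon$, e.g.\ at $\varepsilon=2$, so the clean $-\varepsilon^2\mu/3$ form is only claimed for moderate $\varepsilon$; in the paper's application $\varepsilon$ is an arbitrarily small constant, so this is harmless).

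However, there is nothing to compare against: the paper does not prove this theorem at all. It is stated as a black box with a reference to Janson--\L uczak--Ruci\'nski, \emph{Random Graphs}. So your write-up supplies strictly more detail than the paper itself. One small remark: the theorem as stated in the paper omits the word ``independent'', which you (correctly) add as a hypothesis; the application in the proof of Theorem~\ref{th:main} explicitly justifies independence via linearity of the hypergraph, so this is consistent with how the result is used.
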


Using these auxiliary results, we shall prove Theorem \ref{th:main}. Our proof is a modification of a result
in \cite{AlonAssadi} to the setting of hypergraphs.

\begin{proof}[Proof of Theorem \ref{th:main}]
	Let $C > (2^r er)^{1/(r-1)}$, $A$ a ``large'' constant, and consider a  random 
	$(k, \sigma)$-list assignment $L$ for $H$.
	Let $d_c(v,H,L)$ be a random variable counting the color degree of $(c,v)$ in $H$. Then

$$
\mathbb{E}[d_c(v,H,L)] \leq \Delta \frac{\binom{\sigma-1}{k-1}^{r-1}}{\binom{\sigma}{k}^{r-1}}
\leq \Delta \left(\frac{k}{\sigma}\right)^{r-1}.
$$
Since we choose the colors for each vertex independently and $H$ is linear, we can apply Chernoff's inequality to obtain,
$$
\mathbb{P}\left[d_c(v,H,L) > (1 + \varepsilon)\mathbb{E}[d_c(v,H,L)]\right] < 
\exp \left( \frac{-\varepsilon^2 \Delta}{3}\left(\frac{k}{\sigma}\right)^{r-1}\right),
$$
where $\varepsilon>0$ is an arbitrarily small constant. 

We say that a color $c \in L(v)$ is {\em bad} for $v$ if
it satisfies that $$d_c(v,H,L) > (1 + \varepsilon)\mathbb{E}[d_c(v,H,L)].$$
The probability that a vertex $v$ has at least $\alpha k$ bad colors,
where $\alpha>0$ is a small constant, is at most
$$
\mathbb{P}[v \textrm{ has at least $\alpha k$ bad colors}] \leq 
\binom{k}{\alpha k} \exp \left( \frac{-\varepsilon^2 \Delta}{3}\left(\frac{k}{\sigma}\right)^{r-1}\right)^{\alpha k}
\leq 2^k e^{-\frac{\varepsilon^2\alpha k^r}{3C^{r-1}}}.
$$
Now, since $k \geq A (\log n)^{1/r}$, and $A$ is large enough, taking a union bound over all vertices yields that whp every vertex
has less than $\alpha k$ bad colors.

We define a new random list assignment $L'$ by
for every vertex $v$ removing all bad colors from $L(v)$.
Then $|L'(v)| \geq (1-\alpha) k$ for every vertex $v$, and moreover, since every edge contains $r$ vertices, we get that
$$d_{col}(e,L') \leq rk (1+\varepsilon) \Delta \left(\frac{k}{\sigma}\right)^{r-1}
\leq
rk \frac{ (1+\varepsilon) k^{r-1}}{C^{r-1}} = \frac{ r(1+\varepsilon) k^r}{C^{r-1}}.$$
Thus,
$$\left(e (\Delta_{col}(H,L) +1) \right)^{1/r} \leq 
\left(\frac{e  (r(1+\varepsilon) k^{r})}{C^{r-1}}+e\right)^{1/r}.$$
Now,
since $C > (2^r er)^{1/(r-1)}$ and $\varepsilon$ is arbitrarily small, we have that
$$|L'(v)| \geq \left(e (\Delta_{col}(H,L) +1) \right)^{1/r},$$
for every vertex $v$,
so Theorem \ref{th:Drgas} yields that whp $H$ is $L$-colorable.
\end{proof}


\addcontentsline{toc}{chapter}{References}
{}

\end{document}